\title{Alon-Tarsi number of signed planar graphs\thanks{Supported by the
National Natural Science Foundation of China under Grant No.\,11471273 and 11561058.}}
\author
{
Wei Wang$^{\rm a,b}$,
Jianguo Qian$^{\rm a}$\thanks{Corresponding author: jgqian@xmu.edu.cn.}
\\
{\footnotesize$^{\rm a}$School of Mathematical Sciences, Xiamen University, Xiamen 361005, P. R. China}\\
{\footnotesize$^{\rm b}$College of Information Engineering, Tarim University, Alar 843300, P. R. China}
}
\date{}
\begin{document}
\maketitle
\newtheorem{lem}{Lemma}[section]
\newtheorem{thm}[lem]{Theorem}
\newtheorem{prop}[lem]{Proposition}
\newtheorem{cor}[lem]{Corollary}
\newtheorem{conjecture}[lem]{Conjecture}
\newtheorem{defi}[lem]{Definition}
\newtheorem*{pf}{Proof}
\begin{abstract}
Let $(G,\sigma)$ be any signed planar graph. We show that  the Alon-Tarsi number of $(G,\sigma)$ is at most 5, generalizing a recent result of Zhu for unsigned case. In addition, if $(G,\sigma)$ is $2$-colorable then $(G,\sigma)$ has the Alon-Tarsi number at most 4. We also construct a signed planar graph which is $2$-colorable but  not $3$-choosable.
\end{abstract}
\noindent\textbf{Key words.} signed graph; planar graph; list coloring; Alon-Tarsi number

\noindent\textbf{AMS subject classification.} 05C15; 05C22; 05C10
\section{Introduction}
Let $G$ be a simple graph with vertex set $V(G)$ and edge set $E(G)$. A \emph{signed graph} with underling graph $G$ is a pair $(G,\sigma)$, where $\sigma$ is a mapping from $E(G)$ to $\{+1,-1\}$. An edge $e$ is \emph{positive} (resp. \emph{negative}) if $\sigma(e)=+1$ (resp. $\sigma(e)=-1$). In particular, we denote by $(G,+)$ (resp. $(G,-)$) the signed graph $(G,\sigma)$ if every edge is positive (resp. negative).  We often identify  $(G,+)$ with the (unsigned) underling graph $G$.

Recently, based on the work of Zaslavsky \cite{Zaslavsky1982}, M\'{a}\v{c}ajov\'{a} et al. \cite{Raspaud2016} generalized the concept of chromatic number of an unsigned graph to a signed graph. For a signed graph $(G,\sigma)$ and a color set $C\subset \mathbb{Z}$, a \emph{proper coloring} \cite{Zaslavsky1982}  with color set $C$ is a mapping $\phi\colon\,V(G)\mapsto C$ such that
\begin{equation}\label{proper}
\phi(u)\neq \sigma(uv)\phi(v)
 \end{equation}
for each edge $uv\in E(G)$. For $k\ge 1$, set $M_k=\{\pm 1,\pm 2,\ldots,\pm k/2\}$ if $k$ is even and  $M_k=\{0,\pm 1,\pm 2\ldots,\pm (k-1)/2\}$ if $k$ is odd. A (proper) $k$-\emph{coloring} of a signed graph $(G,\sigma)$ is a proper coloring with color set $M_k$.  A signed graph $(G,\sigma)$ is $k$-\emph{colorable} if it admits a $k$-coloring. The \emph{chromatic number} of $(G,\sigma)$, denoted $\chi(G,\sigma)$, is the minimum $k$ for which $(G,\sigma)$ is $k$-colorable.

Jin et al. \cite{Jin2016} and Schweser et al. \cite{Schweser2017} further considered the list coloring of signed graphs.  For a positive integer $k$, a $k$-\emph{list assignment} of $(G,\sigma)$ is a mapping $L$ which assigns to each vertex $v$ a set $L(v)\subset \mathbb{Z}$ of $k$ permissible colors. For a $k$-list assignment $L$ of $(G,\sigma)$, an $L$-\emph{coloring} is a proper coloring $\phi\colon\,V(G)\mapsto\cup_{v\in V(G)}L(v)$ such that $\phi(v)\in L(v)$  for every vertex $v\in V(G)$. We say that $(G,\sigma)$ is $L$-\emph{colorable} if $G$ has an $L$-coloring. A signed graph $(G,\sigma)$ is called $k$-\emph{choosable} if  $G$ is $L$-colorable for any $k$-list assignment $L$. The \emph{list chromatic number} (or \emph{choice number}) $\chi_l(G,\sigma)$ is the minimum $k$ for which $G$ is $k$-choosable. Clearly, $\chi_l(G,\sigma)\ge \chi(G,\sigma)$. We note that when we restrict the signed graphs $(G,\sigma)$ to $(G,+)$, both the chromatic number and list chromatic number agree with the ordinary chromatic number and list chromatic number of its underlying graph $G$. This explains why we can identify $(G,+)$ with $G$.

Let `$<$' be an arbitrary fixed ordering of the vertices of $(G,\sigma)$. In view of (\ref{proper}),  we define the \emph{graph polynomial} of $(G,\sigma)$ as
 $$P_{G,\sigma}(\bm{x})=\prod_{u\sim v,u<v}(x_u-\sigma(uv)x_v),$$
 where $u\sim v$ means that $u$ and $v$ are adjacent, and $\bm{x}=(x_v)_{v\in V(G)}$ is a vector of $|V(G)|$ variables indexed by the vertices of $G$.  It is easy to see that a mapping $\phi\colon\,V(G)\mapsto \mathbb{Z}$ is a proper coloring of $(G,\sigma)$ if and only if $P_{G,\sigma}((\phi(v))_{v\in V(G)})\neq 0$.

 \begin{lem} \label{cnull}\cite{Alon1999}(Combinatorial Nullstellensatz) Let $\mathbb{F}$ be an arbitrary field and let $f=f(x_1,x_2,\ldots,x_n)$ be a polynomial in $\mathbb{F}[x_1,x_2,\ldots,x_n]$. Suppose that the degree $deg(f)$ of $f$ is $\sum_{i=1}^n t_i$ where each $t_i$ is a nonnegative integer, and suppose that the coefficient of $\prod_{i=1}^n x_i^{t_i}$ of $f$ is nonzero. Then if $S_1,S_2,\ldots,S_n$ are subsets of $\mathbb{F}$ with $|S_i|\ge t_i+1$, then there are $s_1\in S_1$,$s_2\in S_2$,\ldots,$s_n\in S_n$ so that $f(x_1,x_2,\ldots,x_n)\neq 0$.
 \end{lem}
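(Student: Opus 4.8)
The plan is to deduce this statement from a more primitive divisibility fact and then extract the relevant coefficient. \textbf{Step 1 (an auxiliary lemma).} First I would prove: if $f\in\mathbb{F}[x_1,\ldots,x_n]$ vanishes at every point of $S_1\times\cdots\times S_n$ and we set $g_i(x_i)=\prod_{s\in S_i}(x_i-s)$, then there are polynomials $h_1,\ldots,h_n\in\mathbb{F}[x_1,\ldots,x_n]$ with $\deg h_i\le \deg f-\deg g_i$ such that $f=\sum_{i=1}^n h_i g_i$. \textbf{Step 2 (reduction to Step 1).} Granting the lemma, I would prove the Nullstellensatz by contradiction and then read off the forbidden coefficient.

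For Step 1 the approach is polynomial reduction. Each $g_i$ is monic of degree $|S_i|$ in $x_i$, so $x_i^{|S_i|}$ equals a polynomial of $x_i$-degree $<|S_i|$ modulo $g_i$. Repeatedly substituting this relation into $f$ produces $\bar f=f-\sum_i h_i g_i$ with $\deg_{x_i}\bar f<|S_i|$ for every $i$, and one checks that no substitution ever raises $\deg(h_i g_i)$ above $\deg f$, whence $\deg h_i\le\deg f-|S_i|=\deg f-\deg g_i$. Since each $g_i$ vanishes on the grid, so does $\bar f$; and a polynomial with $\deg_{x_i}<|S_i|$ for all $i$ that vanishes on all of $S_1\times\cdots\times S_n$ is identically zero --- this follows by induction on $n$, writing $\bar f=\sum_k c_k(x_1,\ldots,x_{n-1})\,x_n^k$, fixing the first $n-1$ coordinates, and using that a nonzero univariate polynomial of degree $<|S_n|$ cannot vanish at $|S_n|$ points. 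Hence $\bar f=0$ and $f=\sum_i h_i g_i$.

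For Step 2, suppose $f(s_1,\ldots,s_n)=0$ for all $(s_1,\ldots,s_n)\in S_1\times\cdots\times S_n$. Shrinking each $S_i$ to any subset of size $t_i+1$ (the hypothesis still holds on the smaller grid, and the conclusion for it implies the conclusion for the original), Step 1 gives $f=\sum_i h_i g_i$ with $\deg g_i=t_i+1$ and $\deg h_i\le \deg f-(t_i+1)=\sum_j t_j-t_i-1$. Now consider the coefficient of $\prod_j x_j^{t_j}$, a monomial of total degree $\deg f$. Any degree-$\deg f$ monomial occurring in $h_i g_i$ factors as (a monomial of $h_i$, of degree $\le \deg f-t_i-1$) times (a monomial of $g_i$, a power $x_i^k$ with $k\le t_i+1$); forcing the two degrees to add up to $\deg f$ forces $k=t_i+1$, so such a monomial is divisible by $x_i^{t_i+1}$ and is therefore \emph{not} $\prod_j x_j^{t_j}$. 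Thus the coefficient of $\prod_j x_j^{t_j}$ in $f=\sum_i h_i g_i$ is $0$, contradicting the assumption that it is nonzero.

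The main obstacle is the degree bookkeeping inside Step 1: one must argue carefully that the successive monic reductions $x_i^{|S_i|}\mapsto(\text{lower-degree polynomial in }x_i)$ terminate and never inflate the total degree of the quotients $h_i$, so that the bound $\deg h_i\le\deg f-\deg g_i$ is maintained throughout the process. Once that is in place, the grid-uniqueness statement and the final coefficient extraction are both short.
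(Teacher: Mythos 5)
The paper does not prove this lemma at all---it is quoted verbatim from Alon's 1999 paper as a known result---so there is no internal proof to compare against. Your argument is correct and is in fact Alon's original one: Step 1 is his ``first'' Combinatorial Nullstellensatz (the decomposition $f=\sum_i h_ig_i$ with $\deg h_i\le\deg f-\deg g_i$, proved by monic reduction in each variable plus the grid-vanishing lemma), and Step 2 is his derivation of the coefficient formulation from it, including the necessary shrinking of each $S_i$ to size $t_i+1$ and the observation that every top-degree monomial of $h_ig_i$ is divisible by $x_i^{t_i+1}$. No gaps.
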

 Note that $P_{G,\sigma}(\bm{x})$ is a homogeneous polynomial. It follows from Lemma \ref{cnull} that if there exists a monomial $c\prod_{v\in V(G)}x_v^{t_v}$ in the expansion of $P_{G,\sigma}(\bm{x})$ such that $c\neq 0$ and $t_v<k$ for all $v\in V(G)$, then $(G,\sigma)$ is $k$-choosable.
 Thus, the notion of Alon-Tarsi number of unsigned graphs defined by Jensen and Toft \cite{Jensen1995} can be naturally extended to signed graphs.
 \begin{defi}\label{AT}\textup{
 The \emph{Alon-Tarsi number} of $(G,\sigma)$, denoted $AT(G,\sigma)$, is the minimum $k$ for which  there exists a monomial $c\prod_{v\in V(G)}x_v^{t_v}$ in the expansion of $P_{G,\sigma}(\bm{x})$ such that $c\neq 0$ and $t_v<k$ for all $v\in V(G)$.}
 \end{defi}
Parallel to the unsigned case, we have
$$AT(G,\sigma)\ge \chi_l(G,\sigma)\ge \chi(G,\sigma).$$
 For a subgraph $H$ of $G$, we use $(H,\sigma)$ to denote the signed subgraph of $(G,\sigma)$ restricted on $H$, i.e.,  $(H,\sigma)=(H,\sigma|_{E(H)})$.  Note that $P_{H,\sigma}(\bm{x})$ is a factor of $P_{G,\sigma}(\bm{x})$. From Definition \ref{AT}, it is clear that $AT(H,\sigma)\le AT(G,\sigma)$.

 For a vertex $v$ in a signed graph $(G,\sigma)$, a \emph{switching} at $v$ means changing the sign of each edge incident to $v$. For $X\subseteq V(G)$, a switching at $X$ means switching at every vertex in $X$ one by one. Equivalently, a switching at $X$ means changing the sign of every edge with exactly one end in $X$. We denote the switched graph by $(G,\sigma^X)$. In particular,  when $X=\{v\}$ we use $(G,\sigma^v)$ to denote $(G,\sigma^{\{v\}})$. Two signed graphs $(G,\sigma)$ and $(G,\sigma')$ are \emph{switching equivalent} if $\sigma'=\sigma^X$ for some $X\subseteq V(G)$.

It is easy to show that two switching equivalent signed graphs have the same chromatic number \cite{Raspaud2016} as well as the same list chromatic number \cite{Jin2016,Schweser2017}. For the Alon-Tarsi numbers, we have the following similar result.
\begin{prop}\label{equAT}
 If two signed graphs $(G,\sigma)$ and $(G,\sigma')$ are switching equivalent then
$AT(G,\sigma)=AT(G,\sigma')$.
\end{prop}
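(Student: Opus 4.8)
The plan is to reduce the statement to a single switching and then to a sign substitution in the graph polynomial. By definition a switching at a set $X\subseteq V(G)$ is a composition of switchings at single vertices, and switching equivalence means $\sigma'=\sigma^X$ for some $X$; so, by induction on $|X|$, it suffices to prove that $AT(G,\sigma^w)=AT(G,\sigma)$ for a single vertex $w\in V(G)$. Fix the vertex ordering `$<$' used in the definition of the graph polynomial and let $m$ be the number of edges joining $w$ to a vertex larger than $w$ in this ordering; note that $m$ depends only on $G$ and `$<$', not on $\sigma$. Write $\bm{x}'$ for the vector obtained from $\bm{x}$ by replacing $x_w$ with $-x_w$ and leaving all other coordinates unchanged.

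The key step is the polynomial identity
$$P_{G,\sigma^w}(\bm{x}) = (-1)^m\,P_{G,\sigma}(\bm{x}'),$$
which I would prove factor by factor. For an edge $uv$ with $u<v$ and $w\notin\{u,v\}$, switching at $w$ does not change $\sigma(uv)$ and $x_w$ does not occur in the corresponding factor, so this factor is $x_u-\sigma(uv)x_v$ on both sides. For an edge incident to $w$, switching flips its sign, so the factor of $P_{G,\sigma^w}$ is $x_u+\sigma(uv)x_v$; meanwhile, substituting $x_w\mapsto -x_w$ in the factor $x_u-\sigma(uv)x_v$ of $P_{G,\sigma}$ gives $-(x_u+\sigma(uv)x_v)$ when $w$ is the smaller endpoint and $x_u+\sigma(uv)x_v$ when $w$ is the larger endpoint. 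Thus these factors agree up to a factor $-1$, and such a factor $-1$ occurs exactly for the $m$ edges from $w$ to a larger vertex. Multiplying over all edges gives the displayed identity; applying the substitution once more (it is an involution) also yields the symmetric form $P_{G,\sigma}(\bm{x})=(-1)^m P_{G,\sigma^w}(\bm{x}')$.

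It remains to note that negating the single variable $x_w$ sends a monomial $c\prod_v x_v^{t_v}$ to $(-1)^{t_w}c\prod_v x_v^{t_v}$, hence turns $P_{G,\sigma}(\bm{x})$ into a polynomial with the same set of monomials, each coefficient merely multiplied by $\pm 1$; the extra global factor $(-1)^m$ likewise changes no coefficient between zero and nonzero. Therefore $P_{G,\sigma^w}(\bm{x})$ and $P_{G,\sigma}(\bm{x})$ have exactly the same family of exponent vectors $(t_v)_{v\in V(G)}$ occurring with nonzero coefficient, and by Definition \ref{AT} this forces $AT(G,\sigma^w)=AT(G,\sigma)$. The induction on $|X|$ then yields $AT(G,\sigma)=AT(G,\sigma')$.

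There is no serious obstacle in this argument; the one point requiring care is the case analysis in the factor-by-factor computation according to whether $w$ is the smaller or the larger endpoint of an incident edge (so that the parity $m$ is identified correctly), together with the bookkeeping that the relevant substitution is precisely the negation of the single variable $x_w$.
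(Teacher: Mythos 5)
Your proposal is correct and follows essentially the same route as the paper's proof: reduce to a single switching, establish $P_{G,\sigma^w}(\bm{x})=\pm P_{G,\sigma}(\bm{x}')$ by a factor-by-factor sign analysis under the substitution $x_w\mapsto -x_w$, and conclude that corresponding monomial coefficients agree up to sign. Your version is slightly more explicit about the global sign $(-1)^m$, but the argument is the same.
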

\begin{proof}
It clearly suffices to consider the case that $\sigma'=\sigma^v$, where $v\in V(G)$. For any edge incident with $v$, say $uv$, we have $\sigma^v(uv)=-\sigma(uv)$.  We use $T(x_u,x_v)$ and $T^v(x_u,x_v)$ to denote the factors corresponding to this edge in $P_{G,\sigma}(\bm{x})$ and $P_{G,\sigma^v}(\bm{x})$, respectively. If $u<v$ then $T(x_u,x_v)=x_u-\sigma(uv)x_v$, $T^v(x_u,x_v)=x_u-\sigma^v(uv) x_v$ and hence $T(x_u,x_v)=T^v(x_u,-x_v)$. If $v<u$ then $T(x_u,x_v)=x_v-\sigma(uv)x_u$ and $T^v(x_u,x_v)=x_v-\sigma^v(uv) x_u$ and hence $T(x_u,x_v)=-T^v(x_u,-x_v)$. In either case we have $T(x_u,x_v)=\pm T^v(x_u,-x_v)$. Letting $\bm{x}^v$ be obtained from $\bm{x}$ by changing $x_v$ to $-x_v$, we have $P_{G,\sigma}(\bm{x})=\pm P_{G,\sigma^v}(\bm{x}^v)$. Therefore, for each monomial $\prod_{v\in V(G)} x_v^{t_v}$, the coefficients of this monomial in $P_{G,\sigma}(\bm{x})$ and $P_{G,\sigma^v}(\bm{x^v})$ and hence in  $P_{G,\sigma^v}(\bm{x})$ have the same absolute value. This implies that $AT(G,\sigma)=AT(G,\sigma^v)$.
\end{proof}
Recently, a few classical results on colorability \cite{Hu2018} and choosability \cite{Jin2016} of planar graphs were generalized to signed planar graphs. In particular, Jin et al. \cite{Jin2016} showed that every signed planar graph is 5-choosable, generalizing the well-known result of Thomassen \cite{Thomassen1994} which states that every (unsigned) planar graph is 5-choosable. Another generalization of  Thomassen's result was given by Zhu \cite{Zhu2018}, who showed that $AT(G)\le 5$ for any planar graph $G$, which solved an open problem proposed by Hefetz \cite{Hefetz2011}. Considering the above results of Jin et.~al \cite{Jin2016} and Zhu \cite{Zhu2018}, it is natural to ask whether the Alon-Tarsi number of each signed planar graph is at most 5. The main aim of this paper is to give an affirmative answer to this question.
\begin{thm}\label{AT5}
For any signed graph $(G,\sigma)$, if $G$ is a planar graph then $AT(G,\sigma)\le 5$.
\end{thm}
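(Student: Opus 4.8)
The plan is to generalise Zhu's proof of the unsigned case \cite{Zhu2018}; the one genuinely new ingredient will be the bookkeeping of the signs $\sigma$. First I would recast $AT(G,\sigma)$ in terms of orientations, exactly as for ordinary graphs. Expanding $P_{G,\sigma}(\bm{x})=\prod_{u\sim v,\,u<v}(x_u-\sigma(uv)x_v)$ by choosing from each factor either the term $x_u$ or the term $-\sigma(uv)x_v$, and reading such a choice as an orientation of the corresponding edge, one finds that the coefficient of a monomial $\prod_{v}x_v^{t_v}$ equals $\varepsilon_D\sum_{H}(-1)^{|H|}\prod_{e\in H}\sigma(e)$, where $D$ is any fixed orientation of $G$ whose out-degree sequence is $(t_v)_v$ (if no such orientation exists the coefficient is $0$), $\varepsilon_D\in\{+1,-1\}$ depends only on $D$ and the ordering, and $H$ runs over the Eulerian sub-digraphs of $D$ (those in which in-degree equals out-degree at each vertex). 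Abbreviating $\Delta_\sigma(D):=\sum_{H}(-1)^{|H|}\prod_{e\in H}\sigma(e)$ --- which reduces to the usual Alon-Tarsi difference $EE(D)-OE(D)$ between the numbers of even and odd Eulerian sub-digraphs when $\sigma\equiv+1$ --- Lemma~\ref{cnull} then yields: if $G$ has an orientation $D$ with maximum out-degree at most $k-1$ and $\Delta_\sigma(D)\neq0$, then $AT(G,\sigma)\le k$. Decomposing an Eulerian sub-digraph $H$ into edge-disjoint directed cycles $C_1,\dots,C_r$ gives $(-1)^{|H|}\prod_{e\in H}\sigma(e)=\prod_{i}\bigl((-1)^{|C_i|}\beta(C_i)\bigr)$, where $\beta(C)=\prod_{e\in C}\sigma(e)$ is the balance of $C$; so $\Delta_\sigma(D)$ depends only on the switching class of $\sigma$ (consistent with Proposition~\ref{equAT}), and a \emph{negative} cycle contributes precisely as a cycle of the opposite length parity would --- the one feature absent from Zhu's setting.

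Using the monotonicity $AT(H,\sigma)\le AT(G,\sigma)$ for subgraphs, I would add edges (with arbitrary signs) to turn $G$ into a plane triangulation, so that it suffices to prove the following Thomassen-type statement by induction on $|V(G)|$: if $(G,\sigma)$ is a signed plane near-triangulation with outer cycle $C$ and $v_1v_2$ a distinguished edge of $C$, then $G$ admits an orientation $D$ in which one of $v_1,v_2$ is a sink and the other has out-degree at most $1$, every vertex of $C$ other than $v_1,v_2$ has out-degree at most $2$, every interior vertex has out-degree at most $4$, and $|\Delta_\sigma(D)|=1$. Taking $C$ to be a triangular face of a triangulation then produces a monomial of $P_{G,\sigma}$ with every exponent at most $4<5$ and nonzero coefficient, whence $AT(G,\sigma)\le5$; the base case $G\cong K_3$ is checked directly. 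For the inductive step I would run Thomassen's dichotomy: if $C$ has a chord, split $G$ along it into near-triangulations $G_1$ (with $v_1v_2\in E(G_1)$) and $G_2$, apply the hypothesis to $G_1$ with distinguished edge $v_1v_2$ and then to $G_2$ with distinguished edge the chord --- labelling the chord's endpoints in $G_2$ so as to agree with the orientation $D_1$ has already placed on it --- and glue the two orientations; if $C$ is chordless, delete the outer neighbour $v_p$ of the sink (say $v_1$), apply the hypothesis to $G-v_p$, and reinsert $v_p$, orienting $v_pv_1$ into $v_1$ (forced), every edge from $v_p$ to its fan $u_1,\dots,u_s$ into $v_p$, and the edge $v_pv_{p-1}$ according to whether $v_{p-1}$ is already out-saturated in $G-v_p$ --- this last choice being the orientation counterpart of Thomassen's selection of one of two colours for $v_p$.

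The step I expect to be the main obstacle is maintaining $|\Delta_\sigma(D)|=1$ through these two operations. In the chord case the point is that, having made the chord the unique out-edge of its tail in $D_2$ while its head is a sink of $D_2$, no directed cycle of the glued orientation $D$ can reach the interior of $G_2$; hence every Eulerian sub-digraph of $D$ already lies in $G_1$, so $\Delta_\sigma(D)=\Delta_\sigma(D_1)$ and the invariant transfers verbatim (and the out-degree bounds follow by an easy count at $v_i,v_j$). In the chordless case one must instead describe precisely the Eulerian sub-digraphs created on reinserting $v_p$ --- each obtained from an old one by adjoining a directed closed walk through $v_p$ of the form $u_i\to v_p\to v_1\to\cdots\to u_i$ or $u_i\to v_p\to v_{p-1}\to\cdots\to u_i$ --- and show that their signed contributions, together with those of the old sub-digraphs, collapse to something of absolute value $1$. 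This is where the signs genuinely interfere: the parity cancellation that makes Zhu's unsigned computation telescope must be rerun with each cycle weighted by $(-1)^{|C|}\beta(C)$ instead of $(-1)^{|C|}$, so that the governing parity is that of $|C|+\bigl(1-\beta(C)\bigr)/2$ rather than of $|C|$. Carrying out this signed telescoping --- and, where convenient, using switching (Proposition~\ref{equAT}) to normalise the signs of the fan edges or of $v_1v_2$ so as to stay close to Zhu's argument --- is the crux.
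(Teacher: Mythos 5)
Your first paragraph correctly reproduces the paper's Section 2: your $\Delta_\sigma(D)=\sum_H(-1)^{|H|}\prod_{e\in H}\sigma(e)=\sum_H(-1)^{\#\{\text{positive edges of }H\}}$ is exactly the difference $|\sigma{}EE(D)|-|\sigma{}OE(D)|$ of Lemma \ref{EE-OE}, and the reduction to near triangulations together with the chord case of the induction also matches the paper. (One slip there: it is not true that ``every Eulerian sub-digraph of $D$ already lies in $G_1$'', since $D_2$ may contain directed cycles entirely inside the interior of $G_2$; the correct statement, which the paper proves, is the multiplicativity $\Delta_\sigma(D)=\Delta_\sigma(D_1)\cdot\Delta_\sigma(D_2)$, and that is what you actually need.)

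The genuine gap is the chordless case, which you yourself flag as ``the crux'' but do not carry out, and your proposed mechanism --- orient $v_pv_{p-1}$ ``according to whether $v_{p-1}$ is already out-saturated'' and then run a ``signed telescoping'' --- is not the idea that makes the proof work. When $v_{p-1}$ already has outdegree $2$ you are forced to orient $v_p\to v_{p-1}$, and then genuinely new Eulerian subdigraphs appear, namely those containing $(v_p,v_{p-1})$ together with exactly one fan edge $(u_i,v_p)$; nothing local forces their signed contributions to cancel, so there is nothing to telescope. The paper's key device, absent from your proposal, is a global dichotomy over \emph{special} orientations of $G'-e=G-v_p-e$ (those meeting the $\sigma$-nice outdegree bounds after reinsertion of $v_p$, but with $v_{p-1}$ restricted to outdegree at most $1$): either some special orientation $D''$ has $\Delta_\sigma(D'')\neq 0$, in which case one extends $D''$ by $(v_{p-1},v_p)$ and the fan edges and is done at once; or every special orientation has $\Delta_\sigma=0$, in which case one extends the $\sigma$-nice $D'$ by $(v_p,v_{p-1})$ and shows that for each $i$ the new subdigraphs through $(u_i,v_p)$ contribute $0$ in total, because taking the symmetric difference with a fixed directed cycle through $u_iv_pv_{p-1}$ places them in a parity-respecting (up to one global sign) bijection with the Eulerian subdigraphs of a \emph{special} orientation $D_i'$, whose $\Delta_\sigma$ vanishes by the case hypothesis. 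Relatedly, your strengthened invariant $|\Delta_\sigma(D)|=1$ is not maintainable by this argument (in the first branch one only knows $\Delta_\sigma(D'')\neq 0$), and it is also unnecessary: the weaker invariant $\Delta_\sigma(D)\neq 0$ survives both the chord-splitting (by multiplicativity) and the dichotomy, and is all that Corollary \ref{ATresult} requires.
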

In \cite{Alon1992}, Alon and Tarsi showed that every bipartite planar graph is 3-choosable. The result is sharp as $K_{2,4}$ is a bipartite planar graph and $\chi_l(K_{2,4})=3$. The following result is a natural extension of this result for signed planar graphs.
\begin{thm}\label{AT4}
For any signed graph $(G,\sigma)$, if $G$ is planar and  $2$-colorable then $AT(G,\sigma)\le 4$. Moreover, there is a signed planar graph which is $2$-colorable but not $3$-choosable.
\end{thm}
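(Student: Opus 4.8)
The plan is to treat the two assertions separately, in each case passing to the all-negative signature.

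\emph{The bound $AT(G,\sigma)\le 4$.} A signed graph is $2$-colorable if and only if it is switching equivalent to $(G,-)$: given a proper $2$-coloring $\phi\colon V(G)\to\{+1,-1\}$ and $X=\phi^{-1}(-1)$, an edge $uv$ with $\sigma(uv)=+1$ has $\phi(u)\neq\phi(v)$, so exactly one of $u,v$ lies in $X$ and the edge becomes negative after switching at $X$, while an edge with $\sigma(uv)=-1$ has $\phi(u)=\phi(v)$, so $u,v$ are either both in or both out of $X$ and the edge stays negative; thus $\sigma^{X}\equiv-1$ (and conversely $(G,-)$ is properly $2$-colored by the constant map). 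By Proposition~\ref{equAT} it suffices to prove $AT(G,-)\le 4$. The crucial observation is that for the all-negative signature
$$P_{G,-}(\bm{x})=\prod_{u\sim v,\ u<v}\bigl(x_u-(-1)x_v\bigr)=\prod_{u\sim v,\ u<v}(x_u+x_v),$$
a product in which every term of every factor has coefficient $+1$. Expanding it therefore involves no cancellation at all, and the coefficient of $\prod_{v}x_v^{t_v}$ in $P_{G,-}(\bm{x})$ equals the number of orientations $D$ of $G$ with $d^{+}_D(v)=t_v$ for all $v$ (record an orientation by orienting each edge $uv$ away from the endpoint whose variable is taken from the factor $(x_u+x_v)$). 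In particular this coefficient is a nonnegative integer, and it is positive exactly when $G$ has an orientation with out-degree sequence $(t_v)_{v\in V(G)}$.

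Now every planar graph $G$ admits an orientation with maximum out-degree at most $3$: each subgraph $H$ of $G$ is planar, hence $|E(H)|\le 3|V(H)|-6<3|V(H)|$, so the desired orientation exists by Hakimi's theorem (equivalently, $G$ has arboricity at most $3$, so $E(G)$ decomposes into three forests which one orients towards chosen roots). Taking such an orientation $D$ and $t_v=d^{+}_D(v)<4$, the coefficient of $\prod_v x_v^{t_v}$ in $P_{G,-}(\bm{x})$ is at least $1\neq 0$, while $\sum_v t_v=|E(G)|=\deg P_{G,-}$; by Definition~\ref{AT}, $AT(G,-)\le 4$, and hence $AT(G,\sigma)\le 4$.

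\emph{The example.} I would construct an all-negative planar graph $(G,-)$ (automatically $2$-colorable) carrying a $3$-list assignment $L$ with no $L$-coloring. Take two ``poles'' $r,s$ with $L(r)=\{1,2,3\}$ and $L(s)=\{4,5,6\}$, and for every pair $(\alpha,\beta)\in\{1,2,3\}\times\{4,5,6\}$ attach a ``diamond'': two new vertices $z_1^{\alpha\beta},z_2^{\alpha\beta}$, each joined to $r$ and to $s$, together with the edge $z_1^{\alpha\beta}z_2^{\alpha\beta}$; put $L(z_1^{\alpha\beta})=\{-\alpha,-\beta,7\}$ and $L(z_2^{\alpha\beta})=\{-\alpha,-\beta,-7\}$, both of size $3$ since $-\alpha\in\{-1,-2,-3\}$ and $-\beta\in\{-4,-5,-6\}$ are distinct and different from $\pm7$. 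The nine diamonds meet only in $\{r,s\}$, so $G$ is planar (it is $K_{2,18}$ with a matching added on the eighteen vertices of its large side), and each diamond contains the triangle $rz_1^{\alpha\beta}z_2^{\alpha\beta}$, so $G$ is non-bipartite --- which is precisely why no reduction to the unsigned bipartite case is available. Finally, any $L$-coloring has $\phi(r)=\alpha_0\in\{1,2,3\}$ and $\phi(s)=\beta_0\in\{4,5,6\}$, hence $\alpha_0\neq\beta_0$; the negative edges $rz_1^{\alpha_0\beta_0}$ and $sz_1^{\alpha_0\beta_0}$ force $\phi(z_1^{\alpha_0\beta_0})\notin\{-\alpha_0,-\beta_0\}$, so $\phi(z_1^{\alpha_0\beta_0})=7$, and symmetrically $\phi(z_2^{\alpha_0\beta_0})=-7$, contradicting the negative edge $z_1^{\alpha_0\beta_0}z_2^{\alpha_0\beta_0}$. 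Thus $(G,-)$ is not $3$-choosable.

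\emph{The main point.} The real content of the first assertion is the remark that the all-negative graph polynomial factors into $(x_u+x_v)$-terms with no sign cancellation; this collapses the Alon--Tarsi condition to the purely combinatorial fact that $G$ has an orientation with small out-degrees, and no ``Alon--Tarsi orientation'' bookkeeping of even versus odd Eulerian subdigraphs is needed. For the example, the only point needing care is checking that the gadgeted graph is genuinely planar; the behaviour of the list assignment is transparent once the diamonds are in place.
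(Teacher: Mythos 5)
Your proof is correct, and it diverges from the paper's in two instructive ways. For the bound $AT(G,\sigma)\le 4$, the overall strategy is the same (reduce to the all-negative signature via switching, then orient with out-degrees at most $3$ using $\mathrm{mad}(G)\le 6$), but you prove the reduction to $(G,-)$ directly instead of citing Schweser--Stiebitz, and --- more substantively --- you bypass the paper's Eulerian-subdigraph machinery (Corollary \ref{ATresult} and Corollary \ref{negative}) by observing that $P_{G,-}(\bm{x})=\prod(x_u+x_v)$ has only nonnegative coefficients, so the coefficient of $\prod_v x_v^{t_v}$ is simply the number of orientations with out-degree sequence $(t_v)$ and is positive whenever one such orientation exists. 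This is more elementary and self-contained; what it gives up is the exact formula $AT(G,-)=\lceil\mathrm{mad}(G)/2\rceil+1$ that the paper's Corollary \ref{negative} establishes (your argument only yields the upper bound, which is all the theorem needs). For the non-$3$-choosable example, your construction is genuinely different: the paper uses a compact $8$-vertex graph built on a negative $K_4$ with four attached vertices and lists drawn from $\{0,\pm1,\pm2\}$, whereas you use a $20$-vertex $K_{2,18}$-plus-matching gadget in the style of the classical $K_{2,4}$/complete-bipartite list-coloring examples. Your gadget is larger but its analysis is arguably more transparent (each diamond kills exactly one pair of pole colors); both examples are valid, all-negative, hence $2$-colorable, and planar. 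No gaps.
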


\section{Orientation and Alon-Tarsi number for\\
signed graphs}
For an unsigned graph $G$, Alon and Tarsi \cite{Alon1992} found a useful combinatorial interpretation of the coefficient for each monomial in the graph polynomial $P_G(\bm{x})$ in terms of orientations and Eulerian subgraphs. By defining hypergraph polynomial and hypergraph orientation,  Ramamurthi and West \cite{Ramamurthi2005} generalized the result of Alon and Tarsi to $k$-uniform hypergraph for prime $k$. In this section we consider the signed graphs. Instead of using orientations of signed graphs, we use orientations of the underlying graphs and find that the result of Alon and Tarsi has a very natural extension for signed graphs.

Let $(G,\sigma)$ be a signed graph and `$<$' be an arbitrary fixed ordering of $V(G)$. For an orientation $D$ of the underling graph $G$, we denote by $(v,u)$ the oriented edge of $D$ with direction from $v$ to $u$. We call an oriented edge $(v,u)$  $\sigma$-\emph{decreasing}  if $v>u$ and $\sigma(uv)=+1$, that is, $(v,u)$ is positive and oriented from the larger vertex to the smaller vertex.  We note that a negative edge will never be $\sigma$-decreasing, no matter how it is oriented.  An orientation $D$ of $G$ is called $\sigma$-\emph{even} if it has an even number of $\sigma$-decreasing edges  and called $\sigma$-\emph{odd} otherwise. For a nonnegative sequence $\bm{d}=(d_v)_{v\in V(G)}$, let $\sigma{}EO(\bm{d})$ and  ${\sigma}OO(\bm{d})$ denote the sets of all $\sigma$-even and $\sigma$-odd orientations of $G$ having outdegree sequence $\bm{d}$, respectively.
\begin{lem}\label{EO-OO}
$P_{G,\sigma}(\bm{x})=\sum_{}(|\sigma{}EO(\bm{d})|-|\sigma{}OO(\bm{d})|)\prod_{v\in V(G)}x_v^{d_v}$,
where $\bm{d}=(d_v)_{v\in V(G)}$ and the summation is taken over all $\bm{d}$ such that  $d_v\ge 0$ and $\sum _{v\in V(G)}d_v=|E(G)|$.
\end{lem}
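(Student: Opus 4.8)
The plan is to expand the product defining $P_{G,\sigma}(\bm{x})$ directly and track, for each choice of monomial, which orientations it corresponds to. Write $P_{G,\sigma}(\bm{x})=\prod_{uv\in E(G),\,u<v}(x_u-\sigma(uv)x_v)$. When we multiply this out, a generic term in the expansion is obtained by selecting, for each edge $uv$ with $u<v$, either the summand $x_u$ or the summand $-\sigma(uv)x_v$. Such a selection is exactly the same data as an orientation $D$ of $G$: interpret ``choose $x_u$ from the factor of edge $uv$'' as orienting that edge \emph{towards} $u$, i.e.\ as the oriented edge $(v,u)$ pointing from the larger endpoint to the smaller one; and ``choose $-\sigma(uv)x_v$'' as orienting it towards $v$, the oriented edge $(u,v)$. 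Under this bijection, each vertex $w$ picks up one factor of $x_w$ for every edge oriented towards $w$, so the resulting monomial is $\prod_{w}x_w^{d^-_D(w)}$ where $d^-_D(w)$ is the indegree. It is slightly cleaner to reindex by outdegrees: since each vertex has total degree $\deg(w)=d^+_D(w)+d^-_D(w)$, choosing the monomial $\prod_w x_w^{d_w}$ with $\sum d_w=|E(G)|$ corresponds to the orientations $D$ with $d^-_D(w)=d_w$ for all $w$, equivalently a fixed outdegree sequence $\bm d'$ determined by $\bm d$ and the degree sequence. I would state the correspondence in whichever of these two conventions matches the paper's definition of $\sigma EO(\bm d)$ (which is phrased via outdegree sequence $\bm d$), adjusting the bookkeeping accordingly.

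Next I would compute the sign attached to a given orientation $D$. For each edge $uv$ with $u<v$: if $D$ orients it as $(v,u)$ we chose $x_u$, contributing sign $+1$; if $D$ orients it as $(u,v)$ we chose $-\sigma(uv)x_v$, contributing sign $-\sigma(uv)$. So the total sign of the term for $D$ is $\prod_{(u,v)\in D,\,u<v}(-\sigma(uv))$, the product running over edges oriented from the smaller endpoint to the larger one. Now I would reorganize this as follows: an edge contributes $-1$ to the sign exactly when it is oriented from small to large \emph{and} is positive, OR oriented from small to large and negative contributes $+1$. Compare with the paper's notion: an oriented edge $(v,u)$ is $\sigma$-decreasing when $v>u$ and $\sigma(uv)=+1$, i.e.\ a positive edge oriented from large to small. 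A positive edge contributes $-1$ to our sign precisely when it is oriented small-to-large, i.e.\ precisely when it is \emph{not} $\sigma$-decreasing. So for a positive edge, (sign contribution $= -1$) $\iff$ (not $\sigma$-decreasing); for a negative edge the sign contribution is $+1$ when oriented small-to-large and... wait — I need to recheck: for a negative edge oriented $(v,u)$ with $u<v$ we chose $x_u$, sign $+1$; oriented $(u,v)$ we chose $-\sigma(uv)x_v=+x_v$, sign $+1$. So negative edges always contribute $+1$, consistent with ``a negative edge is never $\sigma$-decreasing.'' Hence the total sign of $D$ equals $(-1)^{(\text{number of positive edges oriented small-to-large})}$. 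The final step is to convert ``positive edges oriented small-to-large'' into ``$\sigma$-decreasing edges'': for a fixed underlying graph and ordering, the number of positive edges is a constant $p$, and each positive edge is either $\sigma$-decreasing or oriented small-to-large, so (positive edges small-to-large) $= p - |\{\sigma\text{-decreasing edges of }D\}|$. Thus the sign is $(-1)^{p}\cdot(-1)^{-|\sigma\text{-decr}(D)|}=(-1)^p(-1)^{|\sigma\text{-decr}(D)|}$, which is $+(-1)^p$ when $D$ is $\sigma$-even and $-(-1)^p$ when $D$ is $\sigma$-odd.

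I would then assemble: the coefficient of $\prod_w x_w^{d_w}$ in $P_{G,\sigma}(\bm x)$ is the sum over all orientations $D$ realizing that monomial of their signs, which equals $(-1)^p\big(|\sigma EO(\bm d)|-|\sigma OO(\bm d)|\big)$ after matching the outdegree/indegree convention. Strictly speaking this introduces a global factor $(-1)^p$; I expect the paper absorbs it either by a convention on the ordering, by redefining $\sigma$-decreasing with the opposite inequality, or simply because $p$ can be taken even after switching (or they may have $p=0$ implicitly). I would note this constant-sign discrepancy explicitly and reconcile it — likely the cleanest route is to observe that $P_{G,\sigma}$ is defined only up to the fixed ordering, and recomputing with ``$\sigma$-decreasing'' meaning ``positive and oriented small-to-large'' removes the $(-1)^p$; alternatively restrict attention to the ordering in which one reads off the sign directly. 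The genuinely routine parts are the expansion and the sign computation; \textbf{the main obstacle} is purely bookkeeping: getting the direction of the bijection (outdegree vs.\ indegree) and the parity convention to line up exactly with Definition/notation as stated, so that no stray global sign survives. Once the conventions are pinned down, the identity is immediate from collecting like monomials in the expanded product.
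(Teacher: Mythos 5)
Your overall strategy --- expand the product, biject the $2^{|E(G)|}$ terms of the expansion with the orientations of $G$, and compute the sign of each term --- is exactly the paper's proof. The one substantive issue is the direction of your bijection, and it is not as harmless as your closing remarks suggest, because the lemma asserts that the coefficient equals $|\sigma{}EO(\bm{d})|-|\sigma{}OO(\bm{d})|$ exactly, not up to a global sign; a surviving factor $(-1)^p$ would make the statement false whenever the number $p$ of positive edges is odd. The paper's convention is the opposite of yours: selecting $x_u$ from the factor $(x_u-\sigma(uv)x_v)$ corresponds to orienting the edge \emph{out of} $u$, i.e., to the arc $(u,v)$. Then the exponent of $x_w$ in the resulting monomial is the \emph{outdegree} of $w$ (matching the definition of $\sigma{}EO(\bm{d})$ with no reindexing), and the sign contributed by an edge is $-1$ precisely when one selects $-\sigma(uv)x_v$ with $\sigma(uv)=+1$, i.e., precisely when the arc points from the larger endpoint to the smaller one and the edge is positive --- which is verbatim the definition of $\sigma$-decreasing. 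The total sign is therefore $(-1)^{t(D)}$ with $t(D)$ the number of $\sigma$-decreasing arcs, and the coefficient of $\prod_w x_w^{d_w}$ comes out as exactly $|\sigma{}EO(\bm{d})|-|\sigma{}OO(\bm{d})|$ with no stray constant.

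If you insist on your convention, note that none of the three fixes you float actually works (switching changes the signed graph; redefining $\sigma$-decreasing changes the statement being proved; $p$ need not be even). The correct observation is that the indegree/outdegree mismatch and the factor $(-1)^p$ are the \emph{same} discrepancy and cancel each other: the natural way to pass from your indexing (orientations with indegree sequence $\bm{d}$) to the lemma's (orientations with outdegree sequence $\bm{d}$) is to reverse every arc, and under reversal a positive edge is $\sigma$-decreasing in $D^{\mathrm{rev}}$ if and only if it is oriented small-to-large in $D$, so $t(D^{\mathrm{rev}})=p-t(D)$ and your sign $(-1)^{p-t(D)}$ is exactly $(-1)^{t(D^{\mathrm{rev}})}$. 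Either way the global sign disappears; as written, however, your argument stops one bookkeeping step short of establishing the identity in the precise form stated.
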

\begin{proof}
Let $D$ be an arbitrary orientation of $G$. For each oriented edge $e=(v,u)$, define
\begin{equation}
    w(e)=
   \begin{cases}
   -x_v, &\mbox{if $e$ is $\sigma$-decreasing}\\
   x_v, &\mbox{otherwise.}
   \end{cases}
  \end{equation}
  and $w(D)=\prod_{e\in E(D)} w(e)$. Let $d_v$ be the outdegree of $v$ in $D$ for each $v\in V(G)$  and let $t$ be the number of $\sigma$-decreasing edges in $D$. It is easy to see that
  \begin{equation}\label{wD}
  w(D)=(-1)^t\prod_{v\in V(G)}x_v^{d_v}.
  \end{equation}
  Recall that
  $$P_{G,\sigma}(\bm{x})=\prod_{u\sim v,u<v}(x_u-\sigma(uv)x_v).$$
  By selecting $x_u$ or $-\sigma(uv)x_v$ from each factor $(x_u-\sigma(uv)x_v)$, we expand $P_{G,\sigma}(\bm{x})$ and obtain $2^{|E(G)|}$ monomials, each of which has coefficient $\pm 1$.  For each monomial, we orient the
edge $uv$ of $G$ with direction from $u$ to $v$ if, in the factor $(x_u-\sigma(uv)x_v)$,  $x_u$ is selected; or  from $v$ to $u$ if  $-\sigma(uv)x_v$ is selected. This is clearly a bijection between the $2^{|E(G)|}$ monomials and the $2^{|E(G)|}$ orientations of $G$. Therefore,
  \begin{equation}\label{pGwD}
  P_{G,\sigma}(\bm{x})=\sum w(D),
  \end{equation} where $D$ ranges over all orientations of $G$.

  Let $\bm{d}=(d_v)_{v\in V(G)}$ be the sequence of outdegrees of some orientation $D$. Clearly, $d_v\ge 0$ and $\sum_{v\in V(G)} d_v=|E(G)|$. Note that there are exactly $|\sigma{}EO(\bm{d})|$ (resp. $|\sigma{}OO(\bm{d})|$) $\sigma$-even (resp. $\sigma$-odd) orientations of $G$. It follows from (\ref{wD}) and (\ref{pGwD}) that the coefficient of
  $\prod_{v\in V(G)}x_v^{d_v}$ in the expansion of $P_{G,\sigma}(\bm{x})$  is $|\sigma{}EO(\bm{d})|-|\sigma{}OO(\bm{d})|$. This proves the lemma.
\end{proof}
For an orientation $D$ of $G$, a subdigraph $H$ of $D$ is called \emph{Eulerian} if $V(H)=V(D)$ and the indegree of every vertex equals its outdegree. We note that an Eulerian subdigraph $H$ defined here is not necessarily connected. In particular, a vertex is called {\it isolated}  in $H$ if it has indegree 0 (and therefore, has outdegree 0) in $H$. Further, $H$ is called $\sigma$-\emph{even} (resp. $\sigma$-\emph{odd}) if $H$ has an even (resp. odd) number of positive edges.  Let ${\sigma}EE(D)$ (resp. ${\sigma}OE(D)$) denote the set of all $\sigma$-even (resp. $\sigma$-odd) Eulerian subdigraphs of $D$.

\begin{lem}\label{EE-OE}
Let $(G,\sigma)$ be a signed graph and $D$ be an orientation of $G$ with outdegree sequence $\bm{d}=(d_v)_{v\in V(G)}$. Then the coefficient of $\prod_{v\in V(G)}x_v^{d_v}$ in the expansion of $P_{G,\sigma}(\bm{x})$ is equal to $\pm (|\sigma{}EE(D)|-|\sigma{}OE(D)|)$.
\end{lem}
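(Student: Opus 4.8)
The plan is to follow the classical Alon--Tarsi bijection argument, adding the bookkeeping needed to track the sign discrepancy. Fix the reference orientation $D$ with outdegree sequence $\bm{d}$, and let $t_0$ be the number of $\sigma$-decreasing edges of $D$. First I would record the standard local observation: if $S$ is any set of edges of $D$ and we reverse all edges in $S$, then the outdegree of each vertex $v$ changes by $(\operatorname{indeg}_S v)-(\operatorname{outdeg}_S v)$, where the degrees are taken in the subdigraph of $D$ on edge set $S$. This difference vanishes for every $v$ exactly when $S$, with the orientation inherited from $D$, is an Eulerian subdigraph of $D$. Hence, writing $D^{H}$ for the orientation obtained from $D$ by reversing every edge of an Eulerian subdigraph $H$, the map $H\mapsto D^{H}$ is a bijection from the set of Eulerian subdigraphs of $D$ onto the set of all orientations of $G$ with outdegree sequence $\bm{d}$ (injectivity and surjectivity are both immediate from the description of $S$ above).

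The only genuinely new ingredient --- and the point where the signed setting differs from the unsigned one --- is comparing the $\sigma$-parity of $D^{H}$ with that of $D$. A negative edge is never $\sigma$-decreasing, in either orientation, so reversing a negative edge of $D$ does not change the number of $\sigma$-decreasing edges; a positive edge $uv$ with $u<v$ is $\sigma$-decreasing in precisely one of its two orientations, so reversing it changes the number of $\sigma$-decreasing edges by exactly $\pm 1$, i.e.\ flips its parity. Therefore, if $H$ has $p$ positive edges, the number of $\sigma$-decreasing edges of $D^{H}$ is congruent to $t_0+p \pmod 2$. In particular $D^{H}$ is $\sigma$-even if and only if $p\equiv t_0\pmod 2$; so when $t_0$ is even the bijection $H\mapsto D^{H}$ restricts to bijections $\sigma{}EE(D)\to\sigma{}EO(\bm{d})$ and $\sigma{}OE(D)\to\sigma{}OO(\bm{d})$, while when $t_0$ is odd it restricts to $\sigma{}EE(D)\to\sigma{}OO(\bm{d})$ and $\sigma{}OE(D)\to\sigma{}EO(\bm{d})$.

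Combining this with Lemma \ref{EO-OO}, which says the coefficient of $\prod_{v\in V(G)}x_v^{d_v}$ in $P_{G,\sigma}(\bm{x})$ equals $|\sigma{}EO(\bm{d})|-|\sigma{}OO(\bm{d})|$, one gets that this coefficient is $|\sigma{}EE(D)|-|\sigma{}OE(D)|$ if $t_0$ is even and $|\sigma{}OE(D)|-|\sigma{}EE(D)|$ if $t_0$ is odd, i.e.\ it equals $(-1)^{t_0}\bigl(|\sigma{}EE(D)|-|\sigma{}OE(D)|\bigr)$ in both cases, which is the asserted value up to sign.

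I do not anticipate a serious obstacle: the proof is essentially the Alon--Tarsi bijection, and the single subtle step is the parity computation in the second paragraph, whose key is simply that reversing a positive edge flips the $\sigma$-decreasing parity while reversing a negative edge does nothing --- which is exactly why ``$\sigma$-even/$\sigma$-odd'' for an Eulerian subdigraph is defined by counting \emph{positive} edges. A minor point to check is that disconnected Eulerian subdigraphs and isolated vertices cause no trouble, but this is immediate since the Eulerian condition, and hence the whole argument, is purely local at each vertex.
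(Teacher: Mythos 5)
Your proposal is correct and follows essentially the same route as the paper: the paper's map $D'\mapsto D\oplus D'$ is exactly the inverse of your bijection $H\mapsto D^{H}$, and the parity comparison via positive edges is the same observation the paper states (you merely make the edge-by-edge justification explicit, which the paper leaves to the reader). No gaps.
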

\begin{proof}
For any orientation $D'\in \sigma{}EO(\bm{d})\cup \sigma{}OO(\bm{d})$, let $D\oplus D'$ denote the set of all oriented edges of $D$ whose orientation in $D'$ is in the opposite direction. Since  $D$ and $D'$ have the same outdegree sequence, $D\oplus D'$ is Eulerian. Moreover, $D\oplus D'$ contains an even number of positive edges if and only if $D$ and $D'$ are both $\sigma$-even or both $\sigma$-odd.

Now, the map $\tau\colon\,D'\mapsto D\oplus D'$ is clearly a bijection between  $\sigma{}EO(\bm{d})\cup \sigma{}OO(\bm{d})$ and  $\sigma{}EE(D)\cup \sigma{}OE(D)$. If $D$ is $\sigma$-even, then $\tau$ maps $\sigma{}EO(\bm{d})$ to $\sigma{}EE(D)$ and maps $\sigma{}OO(\bm{d})$ to $\sigma{}OE(D)$. In this case $|\sigma{}EO(\bm{d})|=|\sigma{}EE(D)|$ and $|\sigma{}OO(\bm{d})|= |\sigma{}OE(D)|$.  Thus, $|\sigma{}EO(d)|-|\sigma{}OO(D)|=|\sigma{}EE(D)|-|\sigma{}OE(D)|$. It follows from Lemma \ref{EO-OO} that the coefficient of $\prod_{v\in V(G)}x_v^{d_v}$ in the expansion of $P_{G,\sigma}(\bm{x})$ is equal to $|\sigma{}EE(D)|-|\sigma{}OE(D)|$. Similarly, if $D$ is $\sigma$-odd, then the coefficient of $\prod_{v\in V(G)}x_v^{d_v}$ in the expansion of $P_{G,\sigma}(\bm{x})$ is equal to $|\sigma{}OE(D)|-|\sigma{}EE(D)|$. This proves the lemma.
\end{proof}
By Lemma \ref{EE-OE} and Definition \ref{AT}, we have the following characterization of the Alon-Tarsi number $AT(G,\sigma)$.
\begin{cor}\label{ATresult} For any signed graph $(G,\sigma)$,  $AT(G,\sigma)$ equals the minimum $k$ for which there exists an orientation $D$ of $G$  such that $|\sigma{}EE(D)|\neq |\sigma{}OE(D)|$ and every vertex has outdegree less than $k$.
\end{cor}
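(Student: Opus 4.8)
The plan is to unwind Definition \ref{AT} using the two combinatorial identities just established. Since the statement asserts an equality between $AT(G,\sigma)$ and a certain minimum over orientations, I would prove it by showing that for every $k$ the condition ``$AT(G,\sigma)\le k$'' is equivalent to ``there is an orientation $D$ of $G$ with $|\sigma EE(D)|\neq|\sigma OE(D)|$ in which every vertex has outdegree less than $k$'', and then take minima. Both conditions are upward closed in $k$, so equality of these two sets of $k$'s yields equality of the minima.

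For the direction producing a small monomial from an orientation, I would start with an orientation $D$ whose outdegree sequence $\bm d=(d_v)_{v\in V(G)}$ satisfies $d_v<k$ for all $v$ and for which $|\sigma EE(D)|\neq|\sigma OE(D)|$. By Lemma \ref{EE-OE} the coefficient of $\prod_{v\in V(G)}x_v^{d_v}$ in the expansion of $P_{G,\sigma}(\bm x)$ equals $\pm(|\sigma EE(D)|-|\sigma OE(D)|)$, hence is nonzero, and this monomial witnesses $AT(G,\sigma)\le k$ through Definition \ref{AT}. Conversely, suppose $AT(G,\sigma)\le k$, so there is a monomial $c\prod_{v\in V(G)}x_v^{t_v}$ in the expansion of $P_{G,\sigma}(\bm x)$ with $c\neq0$ and $t_v<k$ for every $v$; by homogeneity of $P_{G,\sigma}$ we have $t_v\ge 0$ and $\sum_v t_v=|E(G)|$. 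The one point needing a small argument is that an arbitrary nonnegative exponent sequence need not a priori be the outdegree sequence of an orientation; but by Lemma \ref{EO-OO} we have $c=|\sigma EO(\bm t)|-|\sigma OO(\bm t)|$, and $c\neq0$ forces $\sigma EO(\bm t)\cup\sigma OO(\bm t)\neq\emptyset$. Picking any $D$ in this union gives an orientation of $G$ with outdegree sequence $\bm t$, so all outdegrees are $<k$, and applying Lemma \ref{EE-OE} to this $D$ shows $c=\pm(|\sigma EE(D)|-|\sigma OE(D)|)$, whence $|\sigma EE(D)|\neq|\sigma OE(D)|$.

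I do not expect a genuine obstacle here: the corollary is essentially a repackaging of Lemmas \ref{EO-OO} and \ref{EE-OE} together with Definition \ref{AT}. The only care required is the bookkeeping around homogeneity of $P_{G,\sigma}$ (to guarantee $t_v\ge 0$ and $\sum_v t_v=|E(G)|$) and the extraction, noted above, of an actual orientation with the prescribed outdegree sequence from the mere nonvanishing of a coefficient.
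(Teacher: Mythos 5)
Your proof is correct and is exactly the argument the paper intends: the corollary is stated there without proof as an immediate consequence of Lemmas \ref{EO-OO} and \ref{EE-OE} together with Definition \ref{AT}, and your unwinding of those two lemmas (including the careful point that a nonzero coefficient forces the exponent sequence to be realized as an outdegree sequence) is the right and complete way to fill in the details.
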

\section{Proof of Theorem \ref{AT5}}
We call a plane graph (a planar graph embedded on the plane) a \emph{near triangulation} if   the boundary of the outer face is a cycle, called  the outer facial cycle, and the boundaries of all inner faces are triangles.
\begin{defi}\label{nice}\textup{
Let $(G,\sigma)$ be a signed graph where $G$ is a near triangulation with outer  facial cycle $v_1v_2\cdots v_k$ and let $e=v_1v_2$. An orientation $D$ of $G-e$ is $\sigma$-\emph{nice} for $G-e$ if the following hold:
\begin{itemize}
\item{$|\sigma{}EE(D)|\neq|\sigma{}OE(D)|$.}
\item{ $v_1$ and $v_2$ have outdegree 0, $v_i$ has outdegree at most 2 for $i\in\{3,4,\ldots,k\}$, and every interior vertex has outdegree at most 4.}
\end{itemize}
}
 \end{defi}
We use the method presented in \cite{Zhu2018} to prove the following theorem.
\begin{thm}\label{key}
Let $(G,\sigma)$ be a signed graph where $G$ is a near triangulation with outer  facial cycle $C=v_1v_2\cdots v_k$ and let $e=v_1v_2$. Then $G-e$ has a $\sigma$-nice orientation.
\end{thm}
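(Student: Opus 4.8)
The plan is to prove Theorem~\ref{key} by induction on $|V(G)|$, following the strategy of Thomassen and its Alon--Tarsi adaptation by Zhu~\cite{Zhu2018}, but carrying the sign function $\sigma$ along and tracking the parity of positive edges in Eulerian subdigraphs rather than of decreasing edges. The base case is a triangle $G=v_1v_2v_3$ with $e=v_1v_2$: here $G-e$ is just the path $v_1v_3v_2$ (two edges), and one checks directly that orienting both edges toward $v_3$ — so that $v_1,v_2$ have outdegree $0$ and $v_3$ has outdegree $2$ — gives an orientation $D$ whose only Eulerian subdigraph is the empty one, so $|\sigma EE(D)| - |\sigma OE(D)| = 1 \neq 0$. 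This satisfies both bullets of Definition~\ref{nice}.

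For the inductive step I would distinguish cases according to the structure of the outer cycle $C = v_1 v_2 \cdots v_k$. First, if $C$ has a chord $v_i v_j$, split $G$ along the chord into two near triangulations $G_1$ (containing $e = v_1 v_2$) and $G_2$ (containing the chord as a boundary edge), apply induction to $G_1$ to get a $\sigma$-nice orientation $D_1$ of $G_1 - e$, then apply induction to $G_2$ with the chord playing the role of the distinguished edge, getting a $\sigma$-nice orientation $D_2$ of $G_2$ minus the chord; glue $D_1$ and $D_2$, and orient the chord itself so that the outdegree bounds are respected (the endpoint whose outdegree is still small receives the out-edge). The key multiplicativity fact, which I would isolate as a small lemma, is that when two digraphs share exactly one vertex, or are glued along an oriented edge in a way that forces it out of every Eulerian subdigraph, the quantity $|\sigma EE| - |\sigma OE|$ of the union is (up to sign, and up to a factor $x^{(\cdot)}$ coming from the shared edge's sign) the product of the two separate quantities — this is exactly the kind of product formula that makes $P_{G,\sigma}$ factor, via Lemmas~\ref{EO-OO} and~\ref{EE-OE}. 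So a nonzero coefficient on each piece yields a nonzero coefficient on the whole.

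The main case is the chordless one. Write the neighbors of $v_k$ in cyclic order along the inner faces at $v_k$ as $v_1, u_1, u_2, \ldots, u_m, v_{k-1}$; these $u_i$ are interior vertices (since $C$ is chordless), and $v_1 u_1 u_2 \cdots u_m v_{k-1} v_{k-2} \cdots v_2$ is the outer cycle $C'$ of $G' := G - v_k$, which is again a near triangulation. Apply the induction hypothesis to $G'$ with the same edge $e = v_1 v_2$, obtaining a $\sigma$-nice orientation $D'$ of $G' - e$. Now put $v_k$ back: orient all edges from $v_k$ to $v_1$ and to $v_{k-1}$ toward $v_k$ (so $v_k$'s outdegree stays low), and orient the edges $v_k u_1, \ldots, v_k u_m$ away from $v_k$. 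This gives $v_k$ outdegree $m$ — and here is the point where I must be careful: $v_{k-1}$'s outdegree in $D'$ was allowed to be up to $2$ as a boundary vertex of $G'$, but in $G$ it is an interior vertex, so I must ensure its outdegree stays $\le 4$; adding the in-edge from $v_k$ doesn't increase it. The harder bookkeeping is that $v_k$ needs outdegree $\le 2$ (it's a boundary vertex of $G$, other than $v_1,v_2$), yet $m$ could be large. This is precisely where Zhu's refinement enters: one does not add $v_k$ as a single new vertex with all of $u_1,\dots,u_m$ pointing at it, but rather one must be cleverer — in Zhu's proof the interior neighbors are handled so that the new outdegree at the new boundary vertex is controlled, typically by choosing which of the two ends ($v_1$-side or $v_{k-1}$-side) to attach to, and by using that the $u_i$ were interior with outdegree budget $\le 4$ that is not yet exhausted, so that \emph{they}, not $v_k$, absorb the new edges. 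I expect reproducing this outdegree-control argument in the signed setting to be the main obstacle, though the sign function only affects the \emph{parity count} $|\sigma EE(D)| - |\sigma OE(D)|$ and not the outdegrees; so the orientation-combinatorial part transfers essentially verbatim, and what needs checking is that the product/gluing formula for $|\sigma EE| - |\sigma OE|$ behaves multiplicatively under the addition of $v_k$ — i.e. that the new Eulerian subdigraphs either avoid $v_k$ entirely (contributing the old signed count) or are forced to have a fixed parity contribution from the edges at $v_k$, so the total is the old nonzero count times a nonzero factor.

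Finally, once Theorem~\ref{key} is in hand, Theorem~\ref{AT5} follows by the standard reduction: an arbitrary signed planar graph embeds in some near triangulation $(G,\sigma)$ (add edges to triangulate; $AT$ is monotone under taking signed subgraphs as noted after Definition~\ref{AT}), pick any edge $e = v_1 v_2$ on the outer cycle, take a $\sigma$-nice orientation $D$ of $G - e$, and orient $e$ from $v_2$ to $v_1$ (say). In $D + e$ every vertex has outdegree $\le 4$ except possibly... one checks $v_1$ gets outdegree $\le 1$, $v_2$ outdegree $\le 1$, the other boundary vertices $\le 2$, interior $\le 4$, all $< 5$; and adding back the single edge $e$ multiplies the signed Eulerian count by a nonzero factor (the edge $e$ lies in no Eulerian subdigraph of $D+e$ since $v_1$ or $v_2$ has the wrong degree, or it flips the count by a controlled sign), so $|\sigma EE(D+e)| \neq |\sigma OE(D+e)|$. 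By Corollary~\ref{ATresult}, $AT(G,\sigma) \le 5$.
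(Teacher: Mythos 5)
Your induction skeleton (base case, chord--splitting with a multiplicative count, deleting $v_k$ in the chordless case) matches the paper's, and your chord case is essentially right once the bookkeeping is fixed: the chord should remain an edge of $G_1$ and be oriented by the $\sigma$-nice orientation $D_1$ of $G_1-e$, while only $G_2$ has it removed as its distinguished edge, so there is nothing left to ``orient separately.'' (The paper also only needs to treat chords incident with $v_k$, since that already guarantees the neighbours $u_1,\dots,u_s$ of $v_k$ are interior in the remaining case.)

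The genuine gap is in the chordless case. You settle the outdegree bookkeeping correctly (orient the edges $u_iv_k$ towards $v_k$, so the $u_i$, now interior in $G$, absorb the new out-edges and $v_k$ keeps outdegree at most $2$), but that is the easy half. The hard half is showing $|\sigma{}EE(D)|\neq|\sigma{}OE(D)|$ after $v_k$ is restored, and your closing claim --- that the Eulerian subdigraphs meeting $v_k$ ``are forced to have a fixed parity contribution,'' so the total is the old count times a nonzero factor --- is not true and is not how the argument goes. Since $v_1$ has outdegree $0$, an Eulerian subdigraph meeting $v_k$ must use $(v_k,v_{k-1})$ and exactly one edge $(u_i,v_k)$, and these subdigraphs contribute \emph{additively}, potentially destroying the inequality inherited from $D'$. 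The paper handles this with a dichotomy over ``special'' orientations of $G'-e$ (those with $v_{k-1}$ of outdegree at most $1$ and each $u_i$ of outdegree at most $3$): if some special orientation $D''$ already satisfies $|\sigma{}EE(D'')|\neq|\sigma{}OE(D'')|$, one builds $D$ from $D''$ with $v_{k-1}v_k$ oriented \emph{into} $v_k$, which isolates $v_k$ in every Eulerian subdigraph; otherwise every special orientation has equal counts, and one shows that for each $i$ the subdigraphs through $(u_i,v_k)$ biject (via symmetric difference with a directed cycle $C_i^{-1}$, with a sign flip governed by the parity of positive edges on $C_i$) with the Eulerian subdigraphs of a special orientation $D_i'$ obtained from $D'$ by reversing a path from $v_{k-1}$ to $u_i$, so their net contribution is $\pm\left(|\sigma{}EE(D_i')|-|\sigma{}OE(D_i')|\right)=0$. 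None of this appears in your proposal; supplying it is the substance of the proof, not a routine verification.
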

\begin{proof}
We prove the theorem by induction on $|V(G)|$. If $|V(G)|=3$ then $G-e$ is a path $v_2v_3v_1$.  Let $D$ be the orientation of $G-e$ such that $E(D)=\{(v_3,v_2),(v_3,v_1)\}$.  Clearly, $D$ is $\sigma$-nice. Now assume that $|V(G)| > 3$ and the assertion holds for graphs of order less than $|V(G)|$.
We shall distinguish two cases, according to whether  the outer facial cycle $C$ contains a chord incident with $v_k$.

First we consider the case that $C$ has a chord $e'=v_kv_j$ where $2\le j\le k-2$ (see Figure 1(a)). In this case  $C_1=v_1v_2\cdots v_jv_k$ and $C_2=v_kv_jv_{j+1}\cdots v_{k-1}$ are two cycles of $G$.   For $i\in\{1,2\}$, let $G_i$ be the subgraph of $G$ formed by $C_i$ and its interior part.  By the induction hypothesis, $G_1-e$ has a $\sigma$-nice orientation $D_1$, and $G_2-e'$ has a $\sigma$-nice orientation $D_2$.  We notice that $D_1$ and $D_2$ are edge disjoint. Let $D=D_1\cup D_2$.  It is clear that $D$ is an orientation of $G-e$. We will show that $D$ is $\sigma$-nice for $G-e$.  It can be checked that $D$ satisfies the outdegree condition in Definition \ref{nice}. It remains to check that  $|\sigma{}EE(D)|\neq|\sigma{}OE(D)|$.

  Note that both $v_k$ and $v_j$ have outdegree 0 in $D_2$.  This implies that $v_k$ and $v_j$ are both isolated in any Eulerian subdigraph of $D$. Therefore, any Eulerian subdigraph $H$ of $D$ has an edge-disjoint decomposition $H=H_1\cup H_2$, where $H_1$ and $H_2$ are Eulerian subdigraphs in $D_1$ and $D_2$, respectively. Thus, the map $\tau\colon\,H\mapsto (H_1,H_2)$ is a bijection between $\sigma{}EE(D)\cup \sigma{}OE(D)$ and $(\sigma{}EE(D_1)\cup \sigma{}OE(D_1))\times (\sigma{}EE(D_2)\cup \sigma{}OE(D_2))$.  Moreover, $H$ is $\sigma$-even if and only if both $H_1$ and $H_2$ are $\sigma$-even, or both are $\sigma$-odd. Thus, we have
\begin{eqnarray*}\label{diff}
&&|\sigma{}EE(D)|-|\sigma{}OE(D)|\\
&=&(|\sigma{}EE(D_1)\times \sigma{}EE(D_2)|+|\sigma{}OE(D_1)\times \sigma{}OE(D_2)|)\\
&&-(|\sigma{}EE(D_1)\times \sigma{}OE(D_2)|+|\sigma{}OE(D_1)\times \sigma{}EE(D_2)|)\\
&=&(|\sigma{}EE(D_1)|-|\sigma{}OE(D_1)|)\cdot(|\sigma{}EE(D_2)|-|\sigma{}OE(D_2)|)\\
&\ne&0,
  \end{eqnarray*}
where the last inequality holds since $D_1$ and $D_2$ are $\sigma$-nice. This proves that $D$ is a $\sigma$-nice orientation of $G-e$.

Next assume that $C$ contains no chord of the form $v_kv_j$ for $j\in \{2,3,\ldots, k-2\}$.   Let $v_{k-1}$,$u_1$,$u_2,\ldots,u_s,v_{1}$ be the neighbors of $v_k$ and be ordered  so that $v_kv_{k-1}u_1$, $v_ku_1u_2$,\ldots,$v_ku_sv_{1}$ are inner facial cycles of $G$ (see Figure 1(b) when $k=3$ and  Figure 1(c) when $k>3$).  Let $G'=G-v_k$. It is clear that $G'$ is a near triangulation with outer facial cycle $v_1v_2\cdots v_{k-1}$$u_1u_2\cdots u_s$. Therefore, by the induction hypothesis, $G'-e$ has a $\sigma$-nice orientation $D'$.
\begin{figure}[htbp]
\begin{center}
\includegraphics[height=10cm]{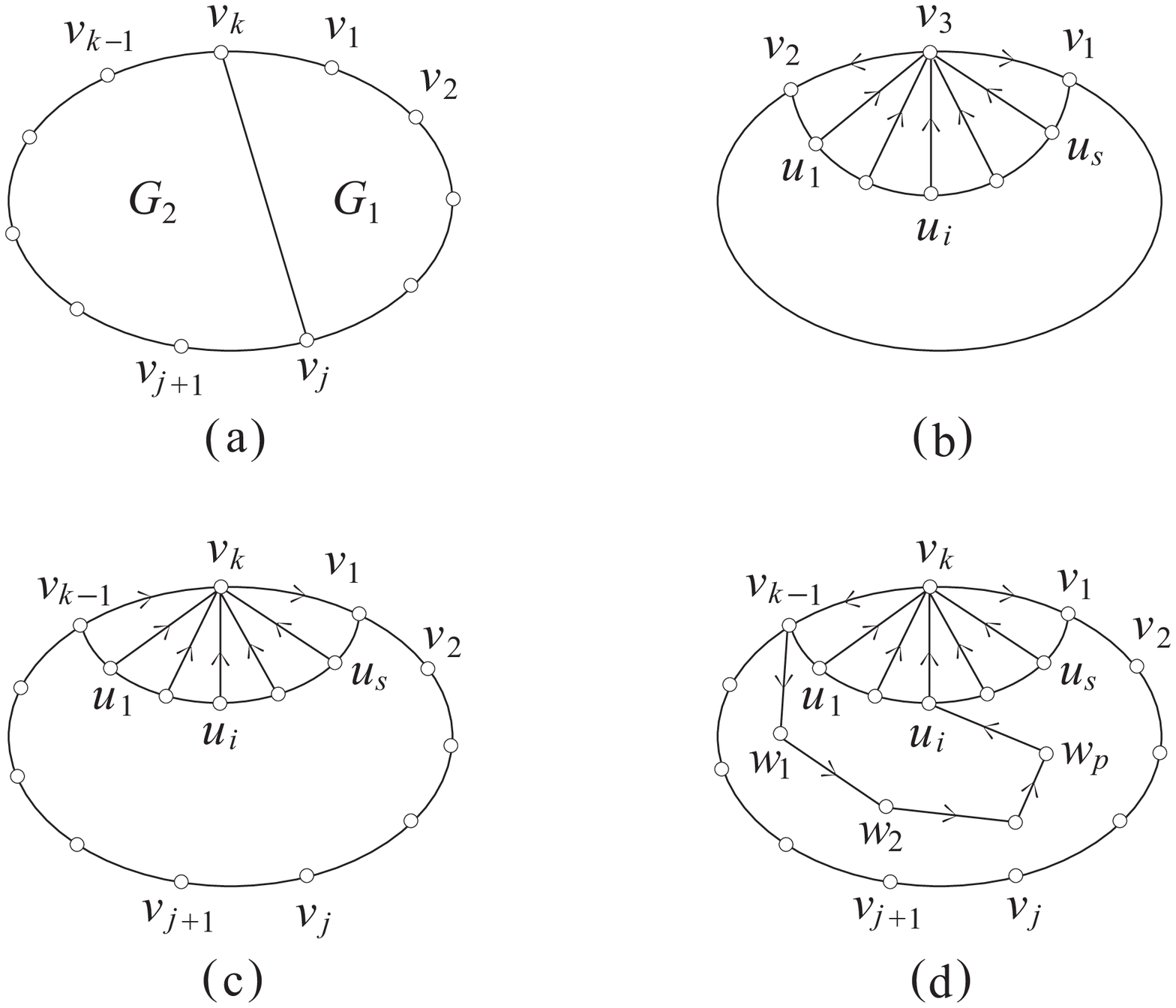}

{\bf Figure 1.} Proof of Theorem \ref{AT5}.
\end{center}
\end{figure}

If $k=3$ (i.e., $C$ is a triangle), then let $D$ be the orientation of $G-e$ obtained from $D'$ by adding the vertex $v_3$ and oriented edges $(v_3,v_1)$, $(v_3,v_2)$ and $(u_i,v_3)$ for $i\in \{1,2,\ldots,s\}$, as shown in Figure 1(b). It is easy to verify that $D$ satisfies the outdegree condition in Definition \ref{nice}. In particular, both $v_1$ and $v_2$ have outdegree 0. Thus,   $v_1$ and $v_2$ are  both isolated in any Eulerian subdigraph of $D$ and therefore, by the definition of $D$, $v_3$ is also isolated in any Eulerian subdigraph of $D$. This means that each Eulerian subdigraph of $D$ is an Eulerian subdigraph of $D'$ by  ignoring the isolated vertex $v_k$. Thus, $\sigma{}EE(D)=\sigma{}EE(D')$ and $\sigma{}OE(D)=\sigma{}EE(D')$. As $D'$ is $\sigma$-nice, $|\sigma{}EE(D')|\neq |\sigma{}OE(D')|$ and hence $|\sigma{}EE(D)|\neq |\sigma{}OE(D)|$. This proves that $D$ is a $\sigma$-nice orientation of $G-e$.

Now assume that $k\ge 4$. We call an orientation $D$ of $G'-e$ \emph{special} if the following hold:
\begin{itemize}
\item{$v_1$ and $v_2$ have outdegree 0, $v_{k-1}$ has outdegree at most 1, each of $v_3,v_4,\ldots,v_{k-1}$ has outdegree at most 2, and each of $u_1,u_2,\ldots,u_s$ has outdegree at most 3.}
\item{Every interior vertex has outdegree at most 4.}
\end{itemize}
To show that $G-e$ has a $\sigma$-nice orientation, we consider two cases:

\noindent\textbf{Case 1.} $G'-e$ has a special orientation $D''$ with $|\sigma{}EE(D'')|\neq|\sigma{}OE(D'')|$.

Let $D$ be the orientation of $G-e$ obtained from $D''$ by adding the vertex $v_k$ and $s+2$ oriented edges $(v_k,v_1)$,$(v_{k-1},v_k)$ and $(u_i,v_k)$ for $i\in \{1,2,\ldots,s\}$, see Figure 1(c). Then $D$ satisfies the outdegree condition of a $\sigma$-nice orientation. Since $v_1$ has outdegree 0 in $D$,  by a similar discussion as above, $v_k$ is isolated in any Eulerian subdigraph of $D$. Therefore, each Eulerian subdigraph of $D$ is an Eulerian subdigraph of $D''$ by ignoring the isolated vertex $v_k$, i.e., $\sigma{}EE(D)=\sigma{}EE(D'')$ and $\sigma{}OE(D)=\sigma{}OE(D'')$. This yields that $|\sigma{}EE(D)|\neq |\sigma{}EE(D)|$ by the condition of this case. Thus, $D$ is a $\sigma$-nice orientation of $G-e$, as desired.

\noindent\textbf{Case 2.} For any special orientation $D''$ (if exists), $|\sigma{}EE(D'')|=|\sigma{}OE(D'')|$.

Recall that $D'$ is a $\sigma$-nice orientation of $G'-e$. Let $D$ be the orientation of $G-e$ obtained from $D'$ by adding the vertex $v_k$ and $s+2$ oriented edges $(v_k,v_1)$, $(v_k,v_{k-1})$ and $(u_i,v_k)$ for $i\in \{1,2,\ldots,s\}$, as shown in Figure 1(d). Clearly, $D$ satisfies the outdegree condition of a $\sigma$-nice orientation. To show that $D$ is $\sigma$-nice for $G-e$, it remains to show that $|\sigma{}EE(D)|\neq |\sigma{}OE(D)|$.

Notice that  $v_1$ has outdegree 0 in $D$ and therefore,  is isolated in any Eulerian subdigraph of $D$. Thus, if $H$ is an Eulerian subdigraph of $D$  and $v_k$ is non-isolated in $H$ then $H$ contains the oriented edge $(v_k,v_{k-1})$ and exactly one of the $s$ oriented edges $(u_1,v_k)$, $(u_2,v_k),\ldots,(u_s,v_k)$. For $i\in\{1,2,\ldots,s\}$, let
$$\sigma{}EE_i(D)=\{H\in\sigma{}EE(D)\colon\,(u_i,v_k)\in H\},$$
and similarly,
$$\sigma{}OE_i(D)=\{H\in \sigma{}OE(D)\colon\,(u_i,v_k)\in H\}.$$
For an Eulerian subdigraph of $D'$, we regard it as an Eulerian subdigraph of $D$ by adding $v_k$ as an isolated vertex. Then we have
$$\sigma{}EE(D)=\sigma{}EE(D')\cup \bigcup_{i=1}^{s}\sigma{}EE_i(D),\sigma{}OE(D)=\sigma{}OE(D')\cup \bigcup_{i=1}^{s}\sigma{}OE_i(D).$$
Since $D'$ is $\sigma$-nice, $|\sigma{}EE(D')|\neq |\sigma{}OE(D')|$. If we can show that $|\sigma{}EE_i(D)|=|\sigma{}OE_i(D)|$ for each $i\in \{1,2,\ldots,s\}$, then $|\sigma{}EE(D)|\neq|\sigma{}OE(D)|$ and we are done.

Let $i$ be any integer in $\{1,2,\ldots,s\}$. If $\sigma{}EE_i(D)\cup\sigma{}OE_i(D)=\emptyset$ then $|\sigma{}EE_i(D)|=|\sigma{}OE_i(D)|=0$, as desired. Thus, we may assume that $\sigma{}EE_i(D)\cup\sigma{}OE_i(D)\neq \emptyset$. Therefore, $D$ has an Eulerian subdigraph and hence a directed cycle containing $(u_i,v_k)$. Let $C_i=u_iv_kv_{k-1}w_1w_2\cdots w_p$ be such a directed cycle and let $D'_i$ be the orientation of $G'-e$ obtained from $D'$ by reversing the direction of edges in the path $v_{k-1}w_1w_2\cdots w_pu_i$. The reversing operation decreases the outdegree of $v_{k-1}$ by 1, increases the outdegree of $u_i$ by 1, and leaves the outdegrees of other vertices in $G'-e$ unchanged. Since $D'$ is $\sigma$-nice for $G'-e$, the outdegree condition of $D'$ implies that $D'_i$ is special. Hence,
$|\sigma{}EE(D'_i)|=|\sigma{}OE(D'_i)|$ by the condition of this case.

Let $C_i^{-1}$ be the reverse of $C_i$, i.e., $C_i^{-1}=w_pw_{p-1}\cdots w_1v_{k-1}v_ku_i$. For each Eulerian subdigraph $H\in \sigma{}EE_i(D)\cup\sigma{}OE_i(D)$, let $H\bigtriangleup C_i^{-1}$ be the symmetry difference of the edge sets of $H$ and $C_i^{-1}$, that is, the set obtained from the edge union $H\cup C_i^{-1}$ of $H$ and $C_i^{-1}$ by deleting the directed $2$-cycles. One may verify that  $H \bigtriangleup C_i^{-1}$ is an Eulerian subdigraph of $D'_i$ and the map $\tau\colon\, H\mapsto H\bigtriangleup C_i^{-1}$ is a bijection between $\sigma{}EE_i(D)\cup\sigma{}OE_i(D)$ and $\sigma{}EE(D'_i)\cup\sigma{}OE(D'_i)$.

For a set $S$ of some oriented edges in an orientation of $(G,\sigma)$,  we use $N^\sigma(S)$ to denote the number of positive edges in $S$.  If $S$ is a directed $2$-cycle, then either $N^\sigma(S)=2$ or $N^\sigma(S)=0$. Thus, $N^\sigma(H\bigtriangleup C_i^{-1})$ and $N^\sigma(H\cup C_i^{-1})$ have the same parity. Of course, $N^\sigma(H\cup C_i^{-1})=N^\sigma(H)+N^\sigma(C_i^{-1})=N^\sigma(H)+N^\sigma(C_i)$. Therefore, if $N^\sigma(C_i)$ is even, then $\tau\colon\, H\mapsto H\bigtriangleup C_i^{-1}$ maps $\sigma{}EE_i(D)$ to $\sigma{}EE(D'_i)$ and $\sigma{}OE_i(D)$ to $\sigma{}OE(D'_i)$. Similarly,  if $N^\sigma(C_i)$ is odd, then it  maps $\sigma{}EE_i(D)$ to $\sigma{}OE(D'_i)$ and $\sigma{}OE_i(D)$ to $\sigma{}EE(D'_i)$. Therefore,  we have
$|\sigma{}EE_i(D)|-|\sigma{}OE_i(D)|=\pm(|\sigma{}EE(D'_i)|-|\sigma{}OE(D'_i)|)$. Note that $D'_i$ is special. It follows from the condition of this case that $|\sigma{}EE_i(D)|=|\sigma{}OE_i(D)|$. This completes the proof of this theorem.
\end{proof}
\noindent\emph{Proof of  Theorem \ref{AT5}} Note that $AT(H,\sigma)\le AT(G,\sigma)$ for any subgraph $H$ of $G$. To show that $AT(G,\sigma)\le 5$ for any planar graph, it suffices to consider the case when $G$ is a near triangulation. Let $v_1v_2\cdots v_k$ be the outer facial cycle of $G$ and $e=v_1v_2$. By Theorem \ref{key}, $G-e$ has a $\sigma$-nice orientation $D$. Let $D'$ be obtained from $D$ by adding the oriented edge $(v_1,v_2)$. Clearly, each vertex has outdegree at most $4$ in $D'$. Moreover, as $v_2$ has outdegree 0 in $D'$, the orientated edge $(v_1,v_2)$ will never appears in any Eulerian subgraph of $D'$. Thus, $|\sigma{}EE(D')|=|\sigma{}EE(D)|$ and  $|\sigma{}OE(D')|=|\sigma{}OE(D)|$. As $D$ is $\sigma$-nice, we have $|\sigma{}EE(D)|\neq |\sigma{}OE(D)|$. Therefore, $|\sigma{}EE(D')|\neq |\sigma{}OE(D')|$ and hence $AT(G,\sigma)\le 5$ by Corollary \ref{ATresult}.

\section{Proof of Theorem \ref{AT4}}
For a graph $G$, the maximum average degree of $G$, denoted $\textup{mad}(G)$, is the maximum of $2|E(H)|/|V(H)|$, where $H$ ranges over all subgraphs of $G$. The following useful criterion on the existence of an orientation with bounded outdegree appeared in \cite{Alon1992}.
\begin{lem}\label{madori}
A graph $G$ has an orientation $D$ such that every vertex has outdegree at most $p$ if and only if $\textup{mad}(G)\le 2p$.
\end{lem}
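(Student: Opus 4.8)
The plan is to prove the two implications separately; the forward direction is a one-line counting argument, while the reverse direction is the substantive part and I would handle it via an augmenting-path (potential-minimization) argument.

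For the ``only if'' direction, suppose $G$ admits an orientation $D$ in which every vertex has outdegree at most $p$. For an arbitrary subgraph $H$ of $G$, restrict $D$ to $E(H)$; this is an orientation of $H$ in which every vertex has outdegree at most $p$ (it can only decrease). Hence $|E(H)| = \sum_{v\in V(H)} d^+_H(v) \le p\,|V(H)|$, so $2|E(H)|/|V(H)| \le 2p$. Taking the maximum over all subgraphs $H$ yields $\textup{mad}(G)\le 2p$.

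For the ``if'' direction, assume $\textup{mad}(G)\le 2p$. Among all orientations $D$ of $G$, choose one minimizing the potential $\Phi(D) := \sum_{v\in V(G)}\max\bigl(0,\,d^+_D(v)-p\bigr)$ (minimizing $\sum_v \binom{d^+_D(v)}{2}$ would serve equally well). I claim $\Phi(D)=0$, which is exactly the desired conclusion. Suppose not; then some vertex $v$ has $d^+_D(v)\ge p+1$. Let $R$ be the set of vertices reachable from $v$ by directed paths in $D$, with $v\in R$. If some $u\in R$ had $d^+_D(u)\le p-1$, pick a directed $v$--$u$ path and reverse all of its edges: this decreases $d^+_D(v)$ by one, increases $d^+_D(u)$ by one, and leaves every other outdegree unchanged, so $\Phi$ strictly decreases, contradicting minimality. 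Hence $d^+_D(u)\ge p$ for every $u\in R$. Moreover, every edge of $D$ leaving a vertex of $R$ has its head in $R$ as well (that head is reachable from $v$), so all out-edges of vertices of $R$ lie inside $G[R]$ and are pairwise distinct; therefore $|E(G[R])| \ge \sum_{u\in R} d^+_D(u) \ge p|R|+1$, the extra $1$ contributed by $v$. Consequently $2|E(G[R])|/|V(G[R])| \ge 2p + 2/|R| > 2p$, contradicting $\textup{mad}(G)\le 2p$. Thus $\Phi(D)=0$ and $D$ is the required orientation.

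I do not expect a genuine obstacle here; the only points demanding care are the bookkeeping in the path-reversal step (that reversing a directed path alters exactly the outdegrees of its two endpoints, each by one) and the observation that $R$ is closed under taking out-neighbors, which is what forces the counted edges into $G[R]$. An alternative route, if one prefers a more structural phrasing, is to model the problem as a degree-constrained orientation in the bipartite incidence graph of $G$ and invoke the deficiency form of Hall's theorem; the Hall condition then reduces, by the same counting as above, to $|F|\le p\,|V(F)|$ for every edge subset $F$, which is precisely the hypothesis $\textup{mad}(G)\le 2p$. Either approach closes the proof without further difficulty.
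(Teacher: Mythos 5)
Your proof is correct. Note, however, that the paper does not prove this lemma at all: it is quoted from Alon and Tarsi \cite{Alon1992} (the statement goes back to Hakimi), so there is no in-paper argument to compare against. Both of your routes are sound. The forward direction is the standard double count of $|E(H)|=\sum_{v\in V(H)}d^+_H(v)$. For the converse, your potential-minimization argument is clean: reversing a directed path changes only the outdegrees of its two endpoints (by $-1$ and $+1$), so minimality of $\Phi$ forces every vertex reachable from an overloaded vertex $v$ to have outdegree at least $p$; since the reachable set $R$ is closed under out-neighbors, all $\sum_{u\in R}d^+(u)\ge p|R|+1$ counted edges lie in $G[R]$, contradicting $\textup{mad}(G)\le 2p$. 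The one point worth making explicit is that you may take the $v$--$u$ path to be simple, so that each internal vertex loses exactly one out-edge and gains exactly one, leaving its outdegree fixed; you do note this. Your alternative via the deficiency form of Hall's theorem on the edge--vertex incidence structure is essentially the argument in the cited source; the augmenting-path version you wrote out in full is the more elementary and self-contained of the two, at the cost of the small extremal-choice bookkeeping. Either way the lemma is established.
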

\begin{cor}\label{negative}
For any graph $G$,
\begin{equation}
AT(G,-)=\left\lceil\frac{\textup{mad}(G)}{2}\right\rceil+1.
\end{equation}
\end{cor}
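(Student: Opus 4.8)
The plan is to read off $AT(G,-)$ directly from the orientation characterisation in Corollary \ref{ATresult}, using the fact that negativity of all edges makes the Eulerian-subdigraph count automatically nonzero, and then to apply the maximum-average-degree criterion of Lemma \ref{madori} to identify the optimal outdegree bound.

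First I would analyse what $\sigma$-even and $\sigma$-odd mean for $(G,-)$. Since every edge of $(G,-)$ is negative, every Eulerian subdigraph $H$ of any orientation $D$ of $G$ contains exactly $0$ positive edges, and $0$ is even; hence $\sigma OE(D)=\emptyset$. On the other hand $\sigma EE(D)$ always contains the trivial Eulerian subdigraph with no edges (every vertex isolated), so $|\sigma EE(D)|\ge 1>0=|\sigma OE(D)|$ for \emph{every} orientation $D$ of $G$. Thus the requirement $|\sigma EE(D)|\neq|\sigma OE(D)|$ in Corollary \ref{ATresult} is vacuous here, and consequently $AT(G,-)$ equals the least integer $k$ for which $G$ admits an orientation in which every vertex has outdegree at most $k-1$.

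Next I would invoke Lemma \ref{madori}: $G$ has an orientation with maximum outdegree at most $p$ if and only if $\textup{mad}(G)\le 2p$. Applying this with $p=k-1$, the value $AT(G,-)$ is the least $k$ with $\textup{mad}(G)\le 2(k-1)$, i.e.\ with $k-1\ge \textup{mad}(G)/2$. Since $k-1$ is an integer, this is equivalent to $k-1\ge\lceil\textup{mad}(G)/2\rceil$, so the smallest admissible $k$ is $\lceil\textup{mad}(G)/2\rceil+1$, which is exactly the claimed formula. (Equivalently: an orientation with maximum outdegree $\lceil\textup{mad}(G)/2\rceil$ exists and witnesses the upper bound, while any orientation witnessing $AT(G,-)=k$ forces $\textup{mad}(G)\le 2(k-1)$ and hence the matching lower bound.)

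There is no genuinely hard step in this argument; it is essentially a bookkeeping exercise once the two quoted results are in hand. The only points that need a little care are recording that $\sigma EE(D)$ is never empty (so that the Eulerian-count condition really is free for $(G,-)$, rather than merely "often satisfied"), and doing the ceiling arithmetic correctly when converting the strict bound "outdegree $<k$" of Corollary \ref{ATresult} into the bound $\textup{mad}(G)\le 2(k-1)$ of Lemma \ref{madori}.
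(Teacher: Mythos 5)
Your proof is correct and follows essentially the same route as the paper: both arguments note that all-negative signature forces $\sigma OE(D)=\emptyset$ while the empty subdigraph guarantees $|\sigma EE(D)|\ge 1$, so Corollary \ref{ATresult} reduces to an outdegree condition, which Lemma \ref{madori} converts into the stated ceiling formula. The only cosmetic difference is that you phrase the conclusion as a single ``least $k$'' computation whereas the paper splits it into matching upper and lower bounds.
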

\begin{proof}
Let $p=\lceil\frac{\textup{mad}(G)}{2}\rceil$. Then $\textup{mad}(G)\le 2p$ and hence, by Lemma \ref{madori}, $G$ has an orientation $D$ in which every outdegree is at most $p$. As all edge in $(G,-)$ is negative, each Eulerian subdigraph of $D$ contains no positive edge and hence is $\sigma{}$-even. Thus $|\sigma{}OE(D)|=0$. Since the empty subdigraph is a $\sigma$-even Eulerian subdigraph, we have $|\sigma{}EE(D)|\ge 1$ and hence $|\sigma{}EE(D)|\neq |\sigma{}OE(D)|$. Thus by Corollary \ref{ATresult}, $AT(G,-)\le p+1.$

On the other hand, by Corollary \ref{ATresult}, $G$ has an orientation $D$ such that each outdegree is at most $AT(G,-)-1$. Thus, by Lemma \ref{madori}, $\textup{mad}(G)\le 2(AT(G,-)-1)$, i.e.,
$AT(G,-)\ge \frac{\textup{mad}(G)}{2}+1$. Therefore, $AT(G,-)\ge p+1$ since $AT(G,-)$ is an integer. This proves the corollary.
\end{proof}
\emph{Proof of  Theorem \ref{AT4}}. For a signed graph $(G,\sigma)$, Schweser and Stiebitz \cite{Schweser2017} showed that $\chi(G,\sigma)\le 2$ if and only if $(G,\sigma)$ is switching equivalent to $(G,-)$. Thus, by Proposition \ref{equAT}, it suffices to consider the case when $(G,\sigma)=(G,-)$, i.e., $\sigma(uv)=-1$ for each $uv\in E(G)$. Let $H$ be any subgraph of a planar graph $G$. Then by Euler's formula for planar graph we have  $2|E(H)|/|V(H)|\le 6$, i.e., $\textup{mad}(G)\le 6$. By  Corollary \ref{negative}, $AT(G,-)\le 4$. This proves the first part of Theorem \ref{AT4}.

Let $(G,-)$ be the negative planar graph as shown in Figure 2. We show that $(G,-)$ is not $3$-choosable.
\begin{figure}[htbp]
\begin{center}
\includegraphics[height=4.2cm]{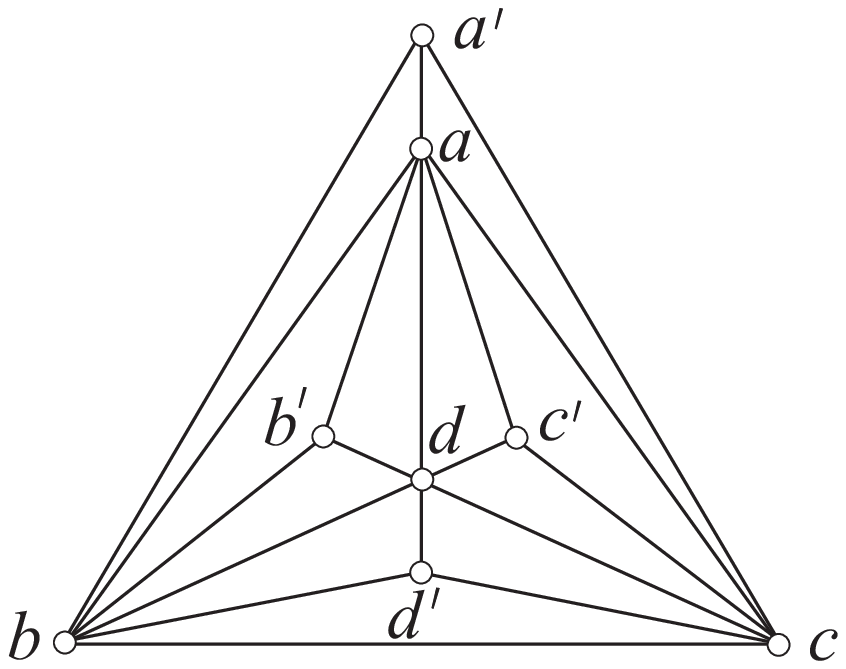}

{\bf Figure 2.} A non-3-choosable negative planar graph $(G,-)$.
\end{center}
\end{figure}\\
Define a $3$-list assignment $L$ as follows:
\begin{itemize}
\item{$L(a)=L(a')=\{0,-1,-2\}$.}
\item{ $L(b)=L(b')=\{0,-1,2\}$.}
\item{$L(c)=L(c')=\{0,1,-2\}$.}
\item{$L(d)=L(d')=\{0,1,2\}$.}
\end{itemize}
It suffices to show that $(G,-)$ is not $L$-colorable.  Suppose to the contrary that $\phi$ is an $L$-coloring of $(G,-)$.  Let $V=\{a,b,c,d\}$.

\noindent\emph{Claim 1}: There exists some  $x\in V$ such that $\phi(x)=0$.

Suppose to the contrary that $\phi(x)\neq 0$ for each $x\in V$. Then $\phi(a)\in \{-1,-2\}$, $\phi(b)\in \{-1,2\}$, $\phi(c)\in\{1,-2\}$ and $\phi(d)\in\{1,2\}$. Note that $(G[V],-)$ is a negative complete graph. Thus $\phi(x)\neq -\phi(y)$ for two distinct $x$, $y$ in $V$. If $\phi(a)=-1$ then $\phi(c)=-2$ and $\phi(d)=2$. Now, $\phi(c) =-\phi(d)$, a contradiction. Similarly, if $\phi(a)=-2$ then $\phi(b)=-1$ and $\phi(d)=1$ and hence $\phi(b)=-\phi(d)$. This is also a contradiction. Thus, Claim 1 follows.

\noindent\emph{Claim 2}: Let $x\in V$. If $\phi(x)=0$  then $\phi(N(x'))=-L(x')$.

We only prove the case that $x=a$ and the other three cases can be settled in the same way.  Since $\phi(a)=0$, we have $\phi(b)\in \{-1,2\}$, $\phi(c)\in\{1,-2\}$ and $\phi(d)\in\{1,2\}$. If $\phi(b)=-1$ then $\phi(c)=-2$ and $\phi(d)=2$. Thus, $\phi(c)=-\phi(d)$, a contradiction. Therefore, $\phi(b)=2$. Similarly, if $\phi(c)=-2$ then $\phi(b)=-1$ and $\phi(d)=1$. We also have a contradiction as $\phi(b)=-\phi(d)$. Therefore, $\phi(c)=1$. Finally, as $N(a')=\{a,b,c\}$ and $L(a')=\{0,-1,-2\}$, we have $\phi(N(a'))=\{\phi(a),\phi(b),\phi(c)\}=\{0,2,1\}=-L(a')$. This proves Claim 2.

Now, by Claim 1, let $x\in V$ satisfy $\phi(x)=0$. Then, $\phi(N(x'))=-L(x')$ by Claim 2. As $\phi(x')\in L(x')$ we have $-\phi(x')\in \phi(N(x'))$, that is, $-\phi(x')=\phi(y)$ for some $y\in N(x')$. Thus, $\phi$ is not proper since $x'y$ is a negative edge. This is a contradiction and hence completes the proof of Theorem \ref{AT4}.



\begin{thebibliography}{99}\addtolength{\itemsep}{-1ex}
\bibitem{Alon1999}N. Alon, Combinatorial Nullstellensatz, Combin. Probab. Comput. 8 (1999) 7-29.
   \url{http://dx.doi.org/10.1017/S0963548398003411}.
\bibitem{Alon1992} N. Alon, M. Tarsi, Colorings and orientations of graphs, Combinatorica 12(2)(1992)125-134.
   \url{http://dx.doi.org/10.1007/BF01204715}.
\bibitem{Hefetz2011} D. Hefetz, On two generalizations of the Alon-Tarsi polynomial method, J. Combina. Theory Ser. B 101(2011) 403-414. \url{http://dx.doi.org/10.1016/j.jctb.2010.12.007}.
\bibitem{Hu2018} L. Hu, X. Li, Every signed planar graph without cycles of length from 4 to 8 is 3-colorable, Discrete Math. 341 (2018) 513-519. \url{http://dx.doi.org/10.1016/j.disc.2017.09.019}.
\bibitem{Jensen1995} T. Jensen, B.Toft, Graph Coloring Problems, Wiley, New York,1995.
\bibitem{Jin2016}L. Jin, Y. Kang, E. Steffen, Choosability in signed planar graphs, Europ. J. Combin.  52(2016)234-243.
   \url{http://dx.doi.org/10.1016/j.ejc.2015.10.001}.
\bibitem{Raspaud2016} E. M\'{a}\v{c}ajov\'{a}, A. Raspaud, M. \v{S}koviera, The chromatic number of a signed graph, Electro. J. Combin. 23(2016)\#P1.14.
\bibitem{Ramamurthi2005} R. Ramamurthi, D. B. West,  Hypergraph extension of the Alon-Tarsi list coloring theorem, Combinatorica 25(2005)355-366.
   \url{http://dx.doi.org/10.1007/s00493-005-0020-8}.
\bibitem{Schweser2017}T. Schweser, M. Stiebitz, Degree choosable signed graphs, Discrete Math. 340(2017)882-891.
   \url{http://dx.doi.org/10.1016/j.disc.2017.01.007}.
\bibitem{Thomassen1994} C. Thomassen, Every planar graph is 5-choosable, J. Combin. Theory, Ser. B 62(1) (1994) 180-181.
   \url{http://dx.doi.org/10.1006/jctb.1994.1062}.
\bibitem {Zhu2018} X. Zhu, The Alon-Tarsi number of planar graphs, J. Combin. Theory, Ser. B., available online.
   \url{http://dx.doi.org/10.1016/j.jctb.2018.06.004}.
\bibitem{Zaslavsky1982} T. Zaslavsky, Signed graph coloring, Discrete Math. 39 (1982) 215-228.
   \url{http://dx.doi.org/10.1016/0012-365X(82)90144-3}.
\end{thebibliography}
\end{document}